\documentclass[10pt]{article}
\usepackage{mathrsfs}
\usepackage{amsthm}
\usepackage{amssymb}
\usepackage{enumerate}
\usepackage{amsmath}
\usepackage{graphicx}
\usepackage{amsfonts}
\usepackage{float}
\usepackage{cite}
\usepackage[text={150mm,220mm}]{geometry}
\usepackage[colorlinks,
            linkcolor=blue,
            anchorcolor=blue,
            citecolor=blue
            ]{hyperref}
\usepackage{xcolor}
\usepackage{comment}

\usepackage{titlesec}
\titleformat{\subsection}[hang]{\it}{\thesubsection}{4 pt}{ \it}[]

\newtheorem{theorem}{Theorem}[section]
\newtheorem{remark}[theorem]{Remark}
\newtheorem{lemma}[theorem]{Lemma}
\newtheorem{proposition}[theorem]{Proposition}

\newtheorem*{claim}{Claim}
\newtheorem*{initial}{Initial Condition}
\numberwithin{equation}{section}
\normalsize

\usepackage{titlesec}
\titleformat*{\section}{\centering}
\titleformat{\subsection}[runin]{\bf}{\thesubsection}{4 pt}{ \bf}[]

\title{Non-equilibrium Fluctuations of the Weakly Asymmetric Normalized Binary Contact Path Process}
\author{Xiaofeng Xue \thanks{\textbf{E-mail}: xfxue@bjtu.edu.cn \textbf{Address}: School of Science, Beijing Jiaotong University, Beijing 100044, China.} and { Linjie Zhao} \thanks{\textbf{E-mail}: zhaolinjie@pku.edu.cn \textbf{Address}: Inria Lille-Nord Europe, France.}}
\date{}

\begin{document}

\maketitle

\begin{abstract}
This paper is a further investigation of the problem studied in
\cite{xue2020hydrodynamics}, where the authors proved a law of large numbers for the empirical measure of the weakly asymmetric normalized binary contact path process on $\mathbb{Z}^d,\, d \geq 3$, and then conjectured that a central limit theorem should hold under a non-equilibrium initial condition.  We prove that the aforesaid conjecture is true when the dimension $d$ of the underlying lattice and the infection rate $\lambda$ of the process are sufficiently large.

\quad

\noindent {\it Keywords:}  normalized binary contact path process; non-equilibrium fluctuations; fourth moment; generalized OU process.
\end{abstract}

\section{Introduction}

\subsection{Background.}

We prove a central limit theorem for the empirical measure of the weakly asymmetric normalized binary contact path process, whose law of large numbers was derived in \cite{xue2020hydrodynamics}. Since we assume the initial distribution  is non-stationary, the fluctuations are non-equilibrium. Non-equilibrium fluctuations have long been an open problem in the theory of hydrodynamic limit for interacting particle systems. Previous results such as in \cite{ravishankar1992fluctuations,PresuttiSpohn83} heavily depend on the special structure of the concerned model, such as self-duality.  Recently, Erhard {\it et al.} \cite{erhard2020non} considered non-equilibrium fluctuations for the symmetric simple exclusion process with a slow bond by  deriving a precise estimate on the correlation function of the system. Moreover, Jara and Menezes  \cite{jara2018non,jaram18nonequilireaction} introduced a powerful tool to obtain non-equilibrium fluctuations of interacting particle systems. The  proof is mainly based on a sharp estimate on the relative entropy \cite{yau1991relative} of the law of the process with respect to the product reference measure associated to the hydrodynamic limit profile. Such an approach was also investigated by Jara and Landim \cite{jara2020stochastic} in the context of a stirring process perturbed by a voter model.

Informally, the model  considered in this paper can be interpreted as the infection of a disease on the lattice $\mathbb{Z}^d$.  To each site $x$ is associated a value $\eta (x) \in [0,\infty)$, which describes the seriousness of the disease at site $x$.  At each transition time,  site $x$ could be recovered or infected by its neighbors at some given rate. Between transition times, the value $\eta (x)$ evolves according to some ODE. We refer the readers to the next subsection for a detailed and rigorous description of the model. Main features of the process are listed below. First of all, the configuration can take arbitrarily large value at each site. Second, at each transition time, a large amount of quantity could be added to or removed from a site. These  features raise two main problems, as already addressed in \cite{xue2020hydrodynamics}, in the proof of fluctuations. The first is to obtain a uniform bound for  the fourth moment of the occupation  variable. The second is to prove continuity of the limiting trajectory.

It is well-known that the model considered in this paper belongs to a larger class of interacting particle systems called linear systems (Cf. \cite[Chapter 9]{liggettips}).  In such a system, the usual way to bound the moments of the occupation variable is first to use the Hille-Yosida Theorem and then to find the positive eigenvector corresponding to the eigenvalue zero of a matrix obtained from the Hille-Yosida Theorem. When bounding the fourth moment, such a strategy is almost impossible since we have to consider a  $(\mathbb{Z}^d)^4\times (\mathbb{Z}^d)^4$ matrix. Instead, depending on the special structure of the model, we relate the fourth moment to a random walk  and use random walk techniques to bound the fourth moment. We believe that such a method could be used to bound higher moments of occupation random variables. However, the main drawback of this method is that we need the dimension of the underlying lattice and the infection rate of the model to be large enough.

\subsection{The model.}\label{the model} In this subsection, we introduce the model formally. The  weakly asymmetric  normalized binary contact path process on the lattice $\mathbb{Z}^d$ is a continuous-time Markov process with state space $[0,+\infty)^{\mathbb{Z}^d}$. Any element $\eta \in [0,+\infty)^{\mathbb{Z}^d}$ is called a {\it configuration}.   For $x,y\in \mathbb{Z}^d$, we write $x\sim y$ if $|x-y| = \sum_{i=1}^d |x_i - y_i| = 1$. We denote by $O$ the origin of $\mathbb{Z}^d$.
For any configuration $\eta\in [0,+\infty)^{\mathbb{Z}^d}$ and $x\in \mathbb{Z}^d$, we define $\eta^x\in [0,+\infty)^{\mathbb{Z}^d}$
as
\[
\eta^x(u)=
\begin{cases}
\eta(u) & \text{~if~}u\neq x,\\
0 & \text{~if~}u=x.
\end{cases}
\]
Moreover, for $x,y\in \mathbb{Z}^d$ such that $x\sim y$, we define $\eta^{x,y}\in [0,+\infty)^{\mathbb{Z}^d}$ as
\[
\eta^{x,y}(u)=
\begin{cases}
\eta(u) & \text{~if~}u\neq x,\\
\eta(x)+\eta(y) & \text{~if~}u=x.
\end{cases}
\]
Fix constants $\lambda,\,\lambda_1,\,\lambda_2 \geq 0 $ and  an integer $N$ large enough such that $\lambda  \geq \lambda_1 / N$. For each $x\in \mathbb{Z}^d$,  let $\{Y_x(t)\}_{t\geq 0}$ be a Poisson process with rate $1$.  Moreover, for each $1 \leq i \leq d$,   let $\{U_{x, x + e_i}(t)\}_{t\geq 0}$ be a Poisson process with rate $\lambda - \lambda_1/N$ and let $\{U_{x,x-{e_i}}(t)\}_{t\geq 0}$ be a Poisson process with rate $\lambda + \lambda_1/N$, where $\{e_i\}_{1 \leq i \leq d}$ is the canonical basis of $\mathbb{Z}^d$.  Note that $U_{x,x+e_i} \neq U_{x+e_i,x}$. Assume that all these Poisson processes are independent. Then the  weakly asymmetric normalized binary contact path process evolves as follows:
\begin{enumerate}[(i)]
	\item  for any event moment $t$ of $Y_x(\cdot)$, $\eta_t=\eta_{t-}^x$, where $\eta_{t-}=\lim_{s<t,s\rightarrow t}\eta_s$,
	
	\item for any event moment $r$ of $U_{x,y}(\cdot)$ where $y \sim x$,\, $\eta_r=\eta_{r-}^{x,y}$,
	
	\item  for $0\leq t_1<t_2$, if there is no event moment of $Y_x(\cdot)$ or $\{U_{x,y}(\cdot):~y\sim x\}$ in $[t_1,t_2]$, then
	\[
	\eta_s(x)=\eta_{t_1}(x) \exp \left\{ \left(1-2\lambda d + \lambda_2/N^2 \right)(s-t_1) \right\}, \quad t_1\leq s\leq t_2,
	\]
	or equivalently,
	\[
	\frac{d}{ds}\eta_s(x)=(1-2\lambda d + \lambda_2/N^2)\, \eta_s(x), \quad t_1\leq s\leq t_2.
	\]
\end{enumerate}

The  weakly asymmetric  normalized binary contact path process can also be defined equivalently via its infinitesimal generator $\mathcal{L}_N$.  More precisely, fix a strictly  positive function  $\alpha: \mathbb{Z}^d \rightarrow \mathbb{R}$ such that
$$
\sum_x \alpha (x)  < \infty,
$$
and let
$$
\mathbf{X} = \left\{ \eta \in [0,\infty)^{\mathbb{Z}^d}: \sum_{x} \alpha (x) \eta (x) < \infty \right\}.
$$
According to the evolution of this process introduced above and   \cite[Theorem 9.1.14]{liggettips}, the generator $\mathcal{L}_N$ of the process is given by
\begin{equation}\label{equ 1.1 generator}
\begin{aligned}
\mathcal{L}_N f(\eta) &=\sum_{x\in \mathbb{Z}^d}\big[f(\eta^x)-f(\eta)\big] + \left(\lambda \pm \frac{\lambda_1}{N}\right)\sum_{x\in \mathbb{Z}^d}\sum_{i=1}^d \big[f(\eta^{x,x \mp e_i})-f(\eta)\big]\\
& \qquad \qquad +\sum_{x\in \mathbb{Z}^d}  \left(1-2\lambda d  + \frac{\lambda_2}{N^2} \right) f_x (\eta)\eta(x),
\end{aligned}
\end{equation}
where $f_x (\eta)$ is the
partial derivative of $f$ with respect to the coordinate $\eta(x)$ and $f$ is taken over all functions on $\mathbf{X}$ such that $f$ has continuous first derivatives $f_x$ and
$$
\sup_x \frac{||f_x||_\infty}{\alpha (x)} < \infty,
$$
where $||\cdot||_\infty$ is the supremum norm of a function.  Let $\{\eta_t\}_{t \geq 0}$ be the process with generator $\mathcal{L}_N$.\footnote{The process $\{\eta_t\}$ actually depends on the scaling parameter $N$ since the generator does. However, to distinguish between the notations $\{\eta_t\}$ and $\{\eta^N_t\}$ introduced soon afterwards and to make notations simple, we suppress the dependence on $N$ here.}  For each $N$, let $\eta^N_t = \eta_{tN^2}$. Then the process $\{ \eta^N_t\}_{t \geq 0}$ has generator $N^2 \mathcal{L}_N$.  Let $\mathbb{P} := \mathbb{P}^N$ be the probability measure on $D([0,\infty),[0,\infty)^{\mathbb{Z}^d})$ induced by the process $\{\eta^N_t\}_{t \geq 0}$ and let $\mathbb{E} := \mathbb{E}^N$ be the corresponding expectation.  Without ambiguity, $\mathbb{P}$ and $\mathbb{E}$ will also be used to denote the law and the expectation of the process $\{\eta_t\}_{t \geq 0}$.

For more background on the model, such as its close relationship with the contact process, we refer the readers to \cite{xue2020hydrodynamics} and references therein.

\subsection{Hydrodynamics.} In this subsection, we review the law of large numbers for the empirical measure proved in \cite{xue2020hydrodynamics}.   Denote by $C_c (\mathbb{R}^d)$  the set of  continuous functions $f : \mathbb{R}^d \rightarrow \mathbb{R}$ with compact support and by $C_c^k (\mathbb{R}^d),\, k \in \mathbb{N}$, the subset of $C_c (\mathbb{R}^d)$ containing all functions furthermore having continuous $k$-th derivatives. Let $\mathcal{M}_+ (\mathbb{R}^d)$ be the set of positive Radon measures in $\mathbb{R}^d$ endowed with the vague topology, i.e., for measures $\mu_n$, $n \geq 1$, and $\mu$ in $\mathcal{M}_+ (\mathbb{R}^d)$, $\mu_n \rightarrow \mu$ as $n \rightarrow \infty$ if and only if for every test function $G \in C_c (\mathbb{R}^d)$, $\langle\mu_n,G\rangle \rightarrow \langle\mu,G\rangle$. Here $\langle \mu, G\rangle = \int G\, d \mu$ for $\mu \in \mathcal{M}_+ (\mathbb{R}^d)$ and $G \in C_c (\mathbb{R}^d)$.
For any $t \geq 0$, let $\pi^N_t (d u) \in \mathcal{M}_+ (\mathbb{R}^d)$ be the random empirical measure given by
\[
\pi^N_t (du):= \frac{1}{N^d}\sum_{x\in \mathbb{Z}^d}\eta_{t }^N(x)\,\delta_{x/N}(du),
\]
where $\delta_{x/N}$ is the usual Dirac measure concentrated on the point $x/N$.

We say $\rho (t,u)$ is a weak solution of the following linear parabolic equation
\begin{equation}\label{eq:intro1}
\begin{cases}
\partial_t \rho \,(t,u) = \lambda\, \Delta \rho\, (t,u) - 2 \lambda_1 \sum_{i=1}^d \partial_{u_i} \rho\, (t,u) + \lambda_2 \, \rho\, (t,u) , \quad t > 0,\,u \in \mathbb{R}^d,\\
\rho (0,u) = \rho_0 (u), \quad  u \in \mathbb{R}^d,
\end{cases}
\end{equation}
if for each $t$, $\rho (t,\cdot)$ is  integrable and  for any $G \in \mathcal{S} \left( \mathbb{R}^d \right)$ and any $t > 0$,
\begin{equation*}
\left \langle \rho (t,\cdot), G\right \rangle - \left \langle \rho_0 (\cdot), G\right \rangle
=  \int_{ 0 }^t d s \Big\{ \lambda \left \langle \rho (s,\cdot), \Delta G \right \rangle   +  2 \lambda_1 \sum_{i=1}^d \left \langle \rho (s,\cdot), \partial_{u_i} G \right \rangle +  \lambda_2\,\left \langle \rho (s,\cdot), \,G\right \rangle
\Big\},
\end{equation*}
where $\rho_0$ is the initial density profile and $\mathcal{S} \left( \mathbb{R}^d\right)$ is the family of all Schwartz functions.

\,

\begin{theorem}[{\cite[Theorem 1.1]{xue2020hydrodynamics}}]\label{thm hl}
	Let $\rho_0: \mathbb{R}^d \rightarrow [0,\infty)$ be bounded and integrable. Initially, $\{\eta^N_0(x)\}_{x\in \mathbb{Z}^d}$ are independent and
	$
	\mathbb{E} \left[ \eta^N_0 (x) \right]= \rho_0 (x/N)
	$ for all $x\in \mathbb{Z}^d$.
	Moreover, assume the second moment is uniformly bounded,
\[
\sup_{x\in \mathbb{Z}^d,N\geq 1} \mathbb{E}\big[\eta^N_0(x)^2\big]<\infty.
\]
	 Suppose $d \geq 3$ and
	\[
	\lambda>\frac{1}{2d(2\gamma_d-1)},
	\]
	where $\gamma_d$ is the escape probability that the simple random walk on $\mathbb{Z}^d$ starting at $O$ never returns to $O$ again. Then for all $t \geq 0$, as $N \rightarrow \infty$,
	\begin{equation*}
	\pi^N_t (d u) \rightarrow \rho (t, u) du~~\text{in probability,}
	\end{equation*}
	where $\rho (t,u)$ is the unique weak solution to the  equation
	\eqref{eq:intro1}.
\end{theorem}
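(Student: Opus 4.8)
\emph{Strategy.} The plan is to run the standard martingale--tightness scheme for hydrodynamic limits. What makes the model tractable despite its unbounded occupation variables is that it is a linear system (Cf.\ \cite[Chapter 9]{liggettips}): the evolution equation for the first moment of the empirical measure closes and coincides, up to errors that vanish as $N\to\infty$, with the weak formulation of \eqref{eq:intro1}. Hence the single serious analytic ingredient is an a priori bound on the second moment, and this is exactly where the hypotheses $d\ge 3$ and $\lambda>1/(2d(2\gamma_d-1))$ are used.

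\emph{Step 1: uniform second-moment bound, the main obstacle.} I would first prove
\[
\sup_{N\ge1}\ \sup_{0\le t\le T}\ \sup_{x\in\mathbb{Z}^d}\ \mathbb{E}\big[\eta^N_t(x)^2\big]<\infty,\qquad
\sup_{N\ge1}\ \sup_{0\le t\le T}\ \sup_{x\in\mathbb{Z}^d}\ \mathbb{E}\big[\eta^N_t(x)\big]<\infty.
\]
Applying the generator to $f(\eta)=\eta(x)\eta(y)$ shows that the two-point functions $\psi^N_t(x,y):=\mathbb{E}[\eta^N_t(x)\eta^N_t(y)]$ solve a closed linear system: off the diagonal the dynamics is that of two independent slightly biased nearest-neighbour random walks together with a bounded scalar rate, while on the diagonal the binary update $\eta(x)^2\mapsto(\eta(x)+\eta(x\pm e_i))^2$ produces a source term feeding the diagonal entries $\psi^N_t(z,z)$, $z\sim x$. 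Writing the solution via Duhamel's formula and applying Gronwall's inequality reduces the bound on $\sup_z\psi^N_t(z,z)$ to the convergence of a Green's-function-type series for the difference of two such walks evaluated at the origin; this series is finite precisely when $d\ge3$ and $\lambda>1/(2d(2\gamma_d-1))$, i.e.\ when the quadratic self-interaction coming from the binary updates is subcritical. The weak drift $\pm\lambda_1/N$, the factor $\lambda_2/N^2$ and the time change $t\mapsto tN^2$ only contribute a bounded prefactor $e^{C\lambda_2 T}$. I expect this to be by far the hardest step; the first-moment bound follows from the same, simpler, computation, since the first moment essentially satisfies a random-walk equation and is conserved up to the $\lambda_2/N^2$ term.

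\emph{Step 2: martingale decomposition and the limit equation.} By Dynkin's formula, for $G\in\mathcal{S}(\mathbb{R}^d)$,
\[
M^N_t(G)=\langle\pi^N_t,G\rangle-\langle\pi^N_0,G\rangle-\int_0^t N^2\mathcal{L}_N\langle\pi^N_s,G\rangle\,ds
\]
is a martingale. Writing $\langle\pi^N_s,G\rangle=N^{-d}\sum_x\eta^N_s(x)G(x/N)$, performing a summation by parts in the infection term and a second-order Taylor expansion of $G$, one finds
\[
N^2\mathcal{L}_N\langle\pi^N_s,G\rangle=\lambda\langle\pi^N_s,\Delta G\rangle+2\lambda_1\sum_{i=1}^d\langle\pi^N_s,\partial_{u_i}G\rangle+\lambda_2\langle\pi^N_s,G\rangle+R^N_s(G),
\]
where the constant contributions $-\langle\pi^N_s,G\rangle$ from recoveries and $(1-2\lambda d)\langle\pi^N_s,G\rangle$ from the ODE cancel against the $2\lambda d\langle\pi^N_s,G\rangle$ coming from the $2d$ infection moves, and $R^N_s(G)$ is $\pi^N_s$ integrated against an $O(N^{-1})$ multiple of higher derivatives of $G$, so $\mathbb{E}[|R^N_s(G)|]=O(N^{-1})$ by Step 1. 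The predictable quadratic variation of $M^N_t(G)$ equals $\int_0^t N^2\big(\mathcal{L}_N(\langle\pi^N_s,G\rangle^2)-2\langle\pi^N_s,G\rangle\,\mathcal{L}_N\langle\pi^N_s,G\rangle\big)ds$; every surviving term carries a prefactor $N^{-d}$ multiplying a product $\eta^N_s(x)\eta^N_s(y)$, whence $\mathbb{E}[M^N_t(G)^2]\le C_G\,T\,N^{-d}\to0$. Moreover, since the $\eta^N_0(x)$ are independent with mean $\rho_0(x/N)$ and uniformly bounded variance, $\langle\pi^N_0,G\rangle\to\int\rho_0\,G$ in $L^2$.

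\emph{Step 3: tightness, continuity, and conclusion.} Tightness of $\{\pi^N_\cdot\}_N$ in $D([0,T],\mathcal{M}_+(\mathbb{R}^d))$ follows (via Mitoma's criterion) from Aldous' criterion applied to the real processes $\langle\pi^N_\cdot,G\rangle$ through the decomposition of Step 2 and the second-moment bound, which also controls the mass carried by $\pi^N_t$ on compact sets. The jumps of $\langle\pi^N_t,G\rangle$ have size $N^{-d}$ times a single occupation variable and hence vanish in $L^2$ by Step 1, so every limit point is concentrated on weakly continuous measure-valued paths; the uniform $L^2$ control together with integrability of the first moment forces each limit point to be absolutely continuous, $\pi_t(du)=\rho(t,u)\,du$ with $\rho(t,\cdot)$ integrable. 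Combining Step 2 with the vanishing of the martingale and the convergence of the initial data shows that $\rho$ satisfies the weak formulation of \eqref{eq:intro1}. Finally, uniqueness of that weak solution---immediate for the linear constant-coefficient parabolic equation \eqref{eq:intro1} via its Fourier/Duhamel representation---identifies every limit point with the deterministic measure $\rho(t,u)\,du$, and this upgrades subsequential convergence in distribution to convergence in probability for each fixed $t\ge0$, which is the assertion of Theorem~\ref{thm hl}.
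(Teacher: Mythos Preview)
The paper does not prove Theorem~\ref{thm hl}; it is quoted as \cite[Theorem~1.1]{xue2020hydrodynamics} to recall the hydrodynamic limit that underlies the fluctuation result, and no argument for it appears anywhere in the present manuscript. There is therefore no proof here against which to compare your proposal.

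That said, your outline is the natural martingale/tightness scheme for hydrodynamic limits of linear systems and matches, in broad strokes, what one would expect \cite{xue2020hydrodynamics} to contain: close the two-point function via the Hille--Yosida computation (this is precisely the matrix $\mathbf{\Psi}$ the present paper invokes in the proof of Lemma~\ref{fluclemma2}), use the resulting second-moment bound together with Dynkin's formula to show that the martingale has vanishing quadratic variation, and conclude by tightness and uniqueness for the linear PDE \eqref{eq:intro1}. One point in your Step~3 is thinner than it looks: you assert that the jumps of $\langle\pi^N_t,G\rangle$ ``vanish in $L^2$'' from the pointwise second-moment bound, but controlling $\sup_{0\le t\le T}$ of the jump size is not a fixed-time estimate. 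In the present paper the analogous issue at the fluctuation scale (Lemma~\ref{lem1}) is handled via a \emph{fourth}-moment bound and an auxiliary tightness argument for $N^{-(d+2)}\sum_x\eta^N_t(x)^2H(x/N)$; at the hydrodynamic scale the relevant object is $N^{-2d}\sum_x\eta^N_t(x)^2H(x/N)^2$, so the second-moment bound should indeed suffice, but the step still requires a genuine argument (e.g.\ a tightness statement for that auxiliary process) rather than a one-line dismissal.
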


\subsection{Fluctuations.}  In this subsection, we state our main result  for the  central limit theorem of the empirical measure. Fix hereafter a horizontal time $T > 0$. Let  $\mathcal{S}' (\mathbb{R}^d)$ be the space of tempered distributions.  Let $D \left( [0,T], \mathcal{S}' (\mathbb{R}^d) \right)$ be the space of $\mathcal{S}' (\mathbb{R}^d)$-valued trajectories which are right continuous and have left limits. The density fluctuations field $ \mathcal{Y}_t^N$, $0 \leq t \leq T$, is an $\mathcal{S}' (\mathbb{R}^d)$-valued process defined as
\begin{equation}\label{fluceqn1}
\mathcal{Y}_t^N (H) = \frac{1}{N^{1+d/2}} \sum_{x \in \mathbb{Z}^d} \bar{\eta}_{t}^N (x) \,H \left(\frac{x}{N}\right),
\end{equation}
where $H \in \mathcal{S} (\mathbb{R}^d)$ and $\bar{\eta}_t^N (x)$ is the centered occupation variable $\bar{\eta}_t^N (x) = \eta_t^N (x) - \mathbb{E} \left[\eta_t^N (x)\right]$.  Note that the space is divided by $N^{1+d/2}$ instead of the usual scaling $N^{d/2}$. We shall prove a central limit theorem under the following non-equilibrium initial condition.

\,

\begin{initial}
Suppose that $\{\eta^N_0(x)\}_{x\in \mathbb{Z}^d}$ are independent and identically distributed random variables with $\mathbb{E} \left[ \eta^N_0(x) \right] \equiv 1$. Furthermore, assume $\eta^N_0(x)$ has  bounded fourth moment uniformly in $N$,
\begin{equation}\label{square-bounded2}
\sup_{N\geq 1}\mathbb{E}\big[\eta^N_0(x)^4\big] < \infty.
\end{equation}
\end{initial}

\,

We remark that the constant one appearing in the first moment above  is not  important at all. Now we are ready to state our main result.

\begin{theorem}\label{main}
Take $\lambda_2 = 0$. There exist $d_0$ and $\lambda_0$ such that for any $d > d_0$ and for any $\lambda > \lambda_0$, the sequence of the processes $\{\mathcal{Y}_t^N, 0 \leq t \leq T\}_{N \geq 1}$ converges in distribution, as $N \rightarrow \infty$, with respect to the Skorohod topology of $D \left( [0,T], \mathcal{S}' (\mathbb{R}^d) \right) $ to the generalized Ornstein-Uhlenbeck process $\{\mathcal{Y}_t, 0 \leq t \leq T\}$ in $C \left( [0,T], \mathcal{S}' (\mathbb{R}^d) \right)$, the formal solution of
	\begin{equation}\label{OU}
	\begin{cases}
	& d \mathcal{Y}_t = \Big(\lambda\Delta \mathcal{Y}_t-2\lambda_1\sum_{i=1}^d\partial_{u_i}\mathcal{Y}_t\Big)dt + \sqrt{C (\lambda,d)} \,d \mathcal{W}_t,\\
	& \mathcal{Y}_0=0,
	\end{cases}
	\end{equation}
	where $C (\lambda,d) = (1 + 2 \lambda d )\left(1 + 1/h_\lambda\right)$ with  $h_\lambda$ given by
	\begin{equation}\label{eqn1}
	h_{\lambda}=\frac{2 \lambda d\left(2 \gamma_{d}-1\right)-1}{1+2 d \lambda}
	\end{equation}
	 and $\mathcal{W}_t$ is a space time white noise of unit variance. Recall $\gamma_d$ is given in Theorem \ref{thm hl}.
\end{theorem}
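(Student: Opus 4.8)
The plan is to follow the classical three-step route for fluctuation theorems: a Dynkin martingale decomposition for $\mathcal{Y}^N_t(H)$, tightness of $\{\mathcal{Y}^N_\cdot\}_{N\geq1}$ via Mitoma's criterion together with sharp moment bounds, and identification of every limit point as the unique solution of the martingale problem associated with \eqref{OU}. The two genuinely hard inputs, both flagged in the introduction, will be a uniform fourth moment bound for the occupation variables and the continuity of the limiting trajectories. Since $\lambda_2=0$ and $\mathbb{E}[\eta^N_0(x)]\equiv1$, one first checks that $\mathbb{E}[\eta^N_t(x)]\equiv1$ for all $t$, so $\bar\eta^N_t(x)=\eta^N_t(x)-1$; applying Dynkin's formula to the generator $N^2\mathcal{L}_N$ and the functional $\eta\mapsto N^{-1-d/2}\sum_x(\eta(x)-1)H(x/N)$ and performing a discrete summation by parts (which turns the symmetric and asymmetric jump parts into $N^2$ times the discrete Laplacian and $N$ times the discrete gradient of $H$), one gets
\begin{equation*}
\mathcal{Y}^N_t(H)=\mathcal{Y}^N_0(H)+\int_0^t\mathcal{Y}^N_s\Big(\lambda\Delta H+2\lambda_1{\textstyle\sum_{i=1}^d}\partial_{u_i}H\Big)\,ds+M^N_t(H)+o_N(1),
\end{equation*}
with $M^N_t(H)$ a martingale. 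Because of the extra factor $N^{-1}$ in the scaling \eqref{fluceqn1}, $\mathbb{E}[\mathcal{Y}^N_0(H)^2]$ is of order $N^{-2}$, so the initial fluctuation vanishes and the limit starts at $\mathcal{Y}_0=0$. The deterministic ODE part of $\mathcal{L}_N$ is a derivation and does not contribute to the quadratic variation, so computing $N^2\mathcal{L}_N(\mathcal{Y}^N(H))^2-2\,\mathcal{Y}^N(H)\,N^2\mathcal{L}_N\mathcal{Y}^N(H)$ yields
\begin{equation*}
\langle M^N(H)\rangle_t=(1+2\lambda d)\int_0^t\frac{1}{N^d}\sum_{x\in\mathbb{Z}^d}\eta^N_s(x)^2\,H(x/N)^2\,ds+o_N(1).
\end{equation*}

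The analytic core is a pair of moment estimates. First I would prove $\sup_{N\geq1,\,0\leq t\leq T,\,x}\mathbb{E}[\eta^N_t(x)^4]<\infty$: exploiting the linear-systems structure, the joint moments $\mathbb{E}[\prod_j\eta_t(x_j)]$ solve a closed linear evolution, but since the relevant operator acts on $(\mathbb{Z}^d)^4\times(\mathbb{Z}^d)^4$ it cannot be diagonalized directly, so instead I would express the fourth moment through a coupled family of random walks on $\mathbb{Z}^d$ and bound it by Green's-function estimates; this is precisely where the hypotheses $d>d_0$ and $\lambda>\lambda_0$ enter. A softer version of the same analysis, involving only two walks, shows that for every fixed $t>0$ the second moment $\mathbb{E}[\eta^N_t(x)^2]$ relaxes, as $N\to\infty$, to $1+1/h_\lambda$, with $h_\lambda$ as in \eqref{eqn1} produced by the escape probability $\gamma_d$; note the hydrodynamic hypothesis on $\lambda$ in Theorem \ref{thm hl} forces $h_\lambda>0$. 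Feeding the fourth moment bound, together with the decay of correlations between occupation variables at distinct sites (again read off from the random-walk representation), into a Boltzmann--Gibbs-type replacement shows that $N^{-d}\sum_x\eta^N_s(x)^2H(x/N)^2\to(1+1/h_\lambda)\int_{\mathbb{R}^d}H(u)^2\,du$ in probability, and hence $\langle M^N(H)\rangle_t\to t\,C(\lambda,d)\int_{\mathbb{R}^d}H(u)^2\,du$. The uniform second moment bound also gives $\sup_{N,\,s\leq T}\mathbb{E}[\mathcal{Y}^N_s(G)^2]<\infty$ for $G\in\mathcal{S}(\mathbb{R}^d)$, which is used for tightness.

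For tightness, Mitoma's criterion reduces matters to tightness of $\{\mathcal{Y}^N_\cdot(H)\}_{N\geq1}$ in $D([0,T],\mathbb{R})$ for each $H$, which follows from the martingale decomposition: the drift term is controlled by the uniform bound on $\mathbb{E}[\mathcal{Y}^N_s(G)^2]$, and the martingale term by Doob's and Burkholder--Davis--Gundy inequalities applied to the quadratic variation above. To upgrade to $C$-tightness I would show that $\sup_{t\leq T}|\mathcal{Y}^N_t(H)-\mathcal{Y}^N_{t-}(H)|\to0$ in probability: a single transition changes $\mathcal{Y}^N(H)$ by at most $C(H)\,N^{-1-d/2}(\eta(x)+\eta(y))$, and the expected number of transitions producing a jump larger than $\varepsilon$, over the $\asymp TN^{d+2}$ Poisson clocks active in a bounded region, is estimated by Markov's inequality and the fourth moment bound. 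Passing to the limit, every limit point $\mathcal{Y}$ is concentrated on $C([0,T],\mathcal{S}'(\mathbb{R}^d))$, satisfies $\mathcal{Y}_0=0$, and is such that for each $H$ the process $\mathcal{Y}_t(H)-\int_0^t\mathcal{Y}_s(\lambda\Delta H+2\lambda_1\sum_{i=1}^d\partial_{u_i}H)\,ds$ is a continuous martingale with deterministic quadratic variation $t\,C(\lambda,d)\int_{\mathbb{R}^d}H(u)^2\,du$; by the Holley--Stroock uniqueness theorem for generalized Ornstein--Uhlenbeck processes this martingale problem has the unique solution \eqref{OU}, which finishes the proof.

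The main obstacle is the uniform fourth moment bound: the linear-systems machinery that controls the second moment degenerates at the fourth order, and the random-walk representation that replaces it only closes when both $d$ and $\lambda$ are large, which is the source of the restriction in the statement. A secondary but genuine difficulty is ruling out jumps in the limiting process, since a single transition can relocate a macroscopically large amount of mass, so the continuity argument must invoke the fourth moment bound in an essential way.
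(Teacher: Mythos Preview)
Your proposal is correct and follows essentially the same route as the paper: Dynkin decomposition, Mitoma plus Aldous for tightness, a uniform fourth-moment bound via a random walk on $(\mathbb{Z}^d)^4$, convergence of $\mathbb{E}[\eta^N_t(x)^2]$ to $1+1/h_\lambda$, and Holley--Stroock uniqueness for the limiting martingale problem.

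A few small differences are worth flagging. First, the correlation decay you invoke for the quadratic-variation replacement must be for the \emph{squares}, namely $\mathrm{Cov}\big(\eta^N_t(x)^2,\eta^N_t(y)^2\big)\to0$ uniformly for $|x-y|>N^{1/2}$; the paper isolates this as a separate proposition and proves it with the same $(\mathbb{Z}^d)^4$ random-walk machinery as the fourth moment, so your parenthetical ``again read off from the random-walk representation'' is pointing in the right direction but should be made explicit, since the two-walk representation that handles $\mathrm{Cov}(\eta_t(x),\eta_t(y))$ is not enough here. Second, for $C$-tightness the paper does not count large jumps as you do; it instead bounds $\sup_t|\Delta\mathcal{Y}^N_t(H)|^2$ by $(1+2d)N^{-(d+2)}\sup_t\sum_x\eta^N_t(x)^2\bar H^2(x/N)$ and then proves that this auxiliary process converges to $0$ in Skorohod topology (a second tightness argument, again driven by the fourth moment). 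Your jump-counting estimate via Markov on the fourth moment against the $\asymp TN^{d+2}$ clock rings is simpler and gives the same conclusion. Third, the paper phrases the limiting martingale problem through bounded test functions $G\in C_c^\infty(\mathbb{R})$ composed with $\mathcal{Y}^N(H)$, which makes uniform integrability automatic when passing the martingale property to the limit; your formulation via the linear martingale $M^N(H)$ and its quadratic variation is equivalent but requires a separate uniform-integrability check for $(M^N_t(H))^2$, which again comes from the fourth-moment bound.
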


\begin{remark}\label{rm3}
Based on \cite{HolleyStroock78}, the rigorous meaning of $\mathcal{Y}_\cdot$ is the unique random element taking values in the space $C \left( [0,T], \mathcal{S}' (\mathbb{R}^d) \right) $ such that:
\begin{enumerate}[(i)]
	\item For every function $H \in \mathcal{S} (\mathbb{R}^d)$ and every  $G \in C_c^\infty (\mathbb{R})$,
	\begin{align*}
	G (\mathcal{Y}_t (H)) - G (\mathcal{Y}_0 (H))
	&- \int_{ 0 }^t  G' (\mathcal{Y}_s (H)) \Big( \lambda  \mathcal{Y}_s (\Delta H) + 2 \lambda_1 \sum_{i=1}^d  \mathcal{Y}_s (\partial_{u_i} H) \Big)\,ds\\
	&\qquad \qquad - \frac{1}{2} C(\lambda,d) ||H||_2^2 \int_{ 0 }^t G'' ( \mathcal{Y}_s (H))\,ds
	\end{align*}
	is a martingale with respect to the natural filtration $\mathcal{F}_t = \sigma (\mathcal{Y}_s (H), s \leq t, H \in \mathcal{S} (\mathbb{R}^d))$, where $||H||_2$ is the $L^2$-norm of the function $H$, i.e., $||H||_2^2=\int_{\mathbb{R}^d}H(u)^2\,du$.
	\item $\mathcal{Y}_0$ is identically equal to zero: $\mathcal{Y}_0 \equiv 0$.
\end{enumerate}
\end{remark}

\begin{remark}
As observed in \cite[Remark 1.3]{xue2020hydrodynamics},  we could reduce the case $\lambda_2 \neq 0$ to the case $\lambda_2 = 0$.  Indeed, if $\lambda_2 \neq 0$, then the sequence of the processes
$
\{\mathcal{Y}_t^N, 0 \leq t \leq T\}_{N \geq 1}
$
converges  in distribution, as $N \rightarrow \infty$, with respect to the Skorohod topology of $D \left( [0,T], \mathcal{S}' (\mathbb{R}^d) \right) $ to
$\{e^{\lambda_2t}\mathcal{Y}_t, 0\leq t\leq T\}$, where \{$\mathcal{Y}_t, 0\leq t\leq T\}$ is the unique solution to \eqref{OU}. By Ito's formula, informally,  $\{e^{\lambda_2t}\mathcal{Y}_t, 0\leq t\leq T\}$ can also be interpreted as the  solution to the SDE
\begin{equation*}
\begin{cases}
& d \mathcal{Z}_t = \Big(\lambda\Delta \mathcal{Z}_t-2\lambda_1\sum_{i=1}^d\partial_{u_i}\mathcal{Z}_t+\lambda_2\mathcal{Z}_t\Big)dt + e^{\lambda_2t}\sqrt{C (\lambda,d)} d \mathcal{W}_t,\\
& \mathcal{Z}_0=0.
\end{cases}
\end{equation*}
That's why we only discuss the case where $\lambda_2=0$.
\end{remark}

\,

The rest of the paper is organized as follows. In Section \ref{sec2} we give a detailed proof of Theorem \ref{main}, except two main propositions, i.e., Propositions \ref{fourth moment} and \ref{correlation}. These two propositions are crucial  and their proofs are postponed to Sections \ref{sec fourm} and \ref{sec cor}.

\section{Proof of Theorem \ref{main}}\label{sec2}

In this section, we prove Theorem \ref{main}.  The strategy to prove such a result is routine. We first prove the tightness of the sequence $\{\mathcal{Y}^N_t,\,0 \leq t \leq T\}_{N \geq 1}$ by checking Aldous' criterion. Then we prove the uniqueness of the limit following the martingale approach. Since the general case $\lambda_1 > 0$ follows from the same strategy, to fix ideas, we shall only present the proof for the case $\lambda_1  = 0$.  Note that most of our results hold for  $d\geq 3$ and $\lambda>\frac{1}{2d(2\gamma_d-1)}$ except in Propositions \ref{fourth moment} and \ref{correlation}, where  we need $d$ and $\lambda$ to be large enough.

Since  Propositions \ref{fourth moment} and \ref{correlation} are one of the main contributions of this paper, and since they will be used frequently in the proof of Theorem \ref{main}, we state them below first.   Proposition \ref{fourth moment} states that the fourth moment of the occupation variable is uniformly bounded, and Proposition \ref{correlation} says that $\eta^N_t (x)^2$ and $\eta^N_t (y)^2$ are asymptotically independent in the limit $N \rightarrow \infty$  when sites $x$ and $y$ are far away. Their  proofs are postponed to Sections \ref{sec fourm} and \ref{sec cor}.

\begin{proposition}\label{fourth moment}
	There exist $d_0$ and $\lambda_0$ such that for any $d > d_0$ and any $\lambda > \lambda_0$,
	\begin{equation*}
	\sup_{0 \leq t \leq T,\, N \geq 1}\,\mathbb{E}\, [\eta_t^N(x)^4] < \infty \quad \text{for all $x \in \mathbb{Z}^d$}.
	\end{equation*}
\end{proposition}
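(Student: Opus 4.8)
Since the paper treats only $\lambda_1=\lambda_2=0$, the generator $\mathcal L_N=\mathcal L$ is independent of $N$ and $\eta_t^N=\eta_{tN^2}$, so it suffices to prove $\sup_{s\ge 0}\mathbb E[\eta_s(x)^4]<\infty$ for the unscaled process, uniformly over initial laws that are i.i.d.\ with mean one and fourth moment bounded by a constant $M$. The idea is to relate the fourth moment to a random walk via the moment dual. Applied to a monomial $f(\eta)=\prod_{j=1}^4\eta(x_j)$, the generator produces again a combination of degree-four monomials: death gives $-\#\{x_1,\dots,x_4\}\,f$, growth gives $4(1-2\lambda d)f$ by Euler's identity, and an infection $\eta(x)\mapsto\eta(x)+\eta(y)$ replaces $\eta(x)^m$ by $(\eta(x)+\eta(y))^m$, whose binomial expansion moves one of the $m$ particles at $x$ to $y$ and, when $m\ge2$, moves a whole block of $\ell\ge2$ of them together to $y$. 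Hence $\psi_s(\vec x):=\mathbb E[\prod_j\eta_s(x_j)]$ solves $\partial_s\psi_s=\mathcal A_4\psi_s$ with $\mathcal A_4=\mathcal G_4+\mathcal V_4$, where $\mathcal G_4$ is the conservative generator of four coalescing/fragmenting random walks on $(\mathbb Z^d)^4$ (each block of co-located particles jumps to each neighbour at rate $\lambda$), and a short computation gives the clean form
\[
\mathcal V_4(\vec x)=\bigl(4-q(\vec x)\bigr)+2\lambda d\sum_{S\subseteq\{1,2,3,4\}:\,|S|\ge2}\mathbf 1\{\text{the particles of }S\text{ coincide in }\vec x\},
\]
$q(\vec x)$ being the number of occupied sites; so $0\le\mathcal V_4\le(1+2\lambda d)\sum_{i<j}\mathbf 1\{x_i=x_j\}+2\lambda d\sum_{|S|\ge3}\mathbf 1\{S\text{ coincides}\}$ and $\mathcal V_4=0$ off the collision set. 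After a routine truncation to justify differentiating a possibly infinite moment, the Feynman–Kac formula together with $\psi_0(\vec y)=\prod_z\mathbb E[\eta_0(z)^{m_z(\vec y)}]\le\max(1,M)^4=:C_0$ yields $\psi_s(\vec x)\le C_0\,\mathbb E_{\vec x}\bigl[\exp\int_0^s\mathcal V_4(\vec X_r)\,dr\bigr]$, $\vec X$ being the $\mathcal G_4$-process; since $\mathcal V_4\ge0$ the proposition reduces to showing
\[
\mathbb E_{(x,x,x,x)}\Bigl[\exp\Bigl((1+2\lambda d)\sum_{i<j}L^{ij}_\infty+2\lambda d\!\!\sum_{S:\,|S|\ge3}\!\!\ell^S_\infty\Bigr)\Bigr]<\infty\qquad(\lambda,d\text{ large}),
\]
where $L^{ij}_\infty$ and $\ell^S_\infty$ are the total times during which the pair $\{i,j\}$, resp.\ the set $S$, of particles of $\vec X$ coincide.

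\textbf{Random-walk estimate.} The multi-clump term is lower order: a coinciding triple or quadruple is reached only through a rare re-merging, and once formed it breaks apart at rate of order $\lambda d$ while reforming with probability $r(d)\to0$ as $d\to\infty$ (by transience and Green-function bounds in high dimension), so each $\ell^S_\infty$ with $|S|\ge3$ is stochastically dominated by an exponential of rate $\gg2\lambda d$ and carries a finite exponential moment of order comfortably above one; such terms are absorbed with room to spare. The main term is $\mathbb E\bigl[\exp\bigl((1+2\lambda d)\sum_{i<j}L^{ij}_\infty\bigr)\bigr]$. For one pair the difference walk is a rate-$4\lambda d$ simple random walk, its local time at the origin is stochastically dominated by an $\mathrm{Exp}(4\lambda d\,\gamma_d)$ variable, and the one-pair condition $1+2\lambda d<4\lambda d\,\gamma_d$ — which for $\lambda$ large is exactly $\gamma_d>1/2$, the two-particle condition of Theorem~\ref{thm hl} — makes $\mathbb E[\exp((1+2\lambda d)L^{ij}_\infty)]$ finite. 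The substance is to handle the six pair local times jointly: in large dimension the four walks decouple rapidly, pairwise collisions being governed by the two-particle mechanism above and all multi-particle collisions being vanishingly rare, so one should be able to show that $\mathbb E\bigl[\exp\bigl((1+2\lambda d)\sum_{i<j}L^{ij}_\infty\bigr)\bigr]$ is controlled by essentially the product of the single-pair exponential moments. Here $d$ large is used for the decoupling and the multi-clump estimate, and $\lambda$ large to reduce the one-pair condition to $\gamma_d>1/2$ and to dominate the additive constants by the $\lambda d$-terms.

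\textbf{Main obstacle.} It is precisely this joint estimate for the pair term, and it is unexpectedly rigid. No Hölder splitting of $\sum_{i<j}L^{ij}_\infty$ is affordable: even partitioning the six pairs into the three perfect matchings of $\{1,2,3,4\}$ would demand a rate-$3(1+2\lambda d)$ exponential moment of each $L^{ij}_\infty$, i.e.\ $\gamma_d>3/2$, which is impossible for every $d$ — the collision potential and the walk escape rates are both of exact order $\lambda d$, with no margin. One is therefore forced to exploit the decoupling of the pair local times in high dimension directly, together with the smallness of all multi-particle encounters, which is exactly why both $d$ and $\lambda$ must be taken large. (An alternative would be to build a bounded Lyapunov function $h$ on $(\mathbb Z^d)^4$ with $(\mathcal G_4+\mathcal V_4)h\le0$ from the two-particle Green's function, corrected on the triple- and quadruple-clump configurations; but this is the $(\mathbb Z^d)^4$ computation the introduction says one wants to avoid, and it meets the same lack of slack.)
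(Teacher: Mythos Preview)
Your Feynman--Kac setup is essentially the continuous-time version of the paper's own representation: the paper expands $e^{t(\mathbf G-8\lambda d\mathbf I)}$ in powers of $t$, introduces a discrete-time random walk $\{S_n\}$ on $(\mathbb Z^d)^4$ whose transitions pick uniformly among the $\mathbf y$ with $\mathbf G(\mathbf x,\mathbf y)\ne 0$, and arrives at $\mathbb E[\eta_t(x)^4]\le C\,\mathrm E_{(x,x,x,x)}\bigl[\prod_{i\ge0}\mathbf H(S_i,S_{i+1})\bigr]$ for a weight $\mathbf H\ge1$ that equals $1$ off the collision set. After Poissonization this is your exponential local-time functional, so up to here the two approaches coincide.

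The gap is that your ``Main obstacle'' paragraph is an honest admission that the proof is not finished. You correctly diagnose why H\"older on the six pair local times is hopeless (potential and escape rate are both of exact order $\lambda d$), but then say only that one is ``forced to exploit the decoupling directly'' without supplying any mechanism. That missing mechanism is the entire content of the proposition. The paper's device is this: classify points of $(\mathbb Z^d)^4$ into five \emph{types} according to their coincidence pattern (type V = all distinct = ``good''). Two facts are then proved. First, from any bad point the expected product of $\mathbf H$ until the walk first \emph{changes type} is bounded by a constant $C_1$ independent of $d,\lambda$, because the one-step contribution on the event of not changing type is strictly less than $1$ for large $d,\lambda$. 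Second, the induced process on types can only step down by one or two, each with probability $O(1/d)$, so it is dominated by an explicit five-state birth--death chain whose hitting time of state $5$ has tails $(C/d)^{n/4}$; hence $\mathrm E[C_1^{\text{(number of type changes before reaching V)}}]$ is finite for $d$ large. Strong Markov then gives a bound $K$ on the product up to the first good point, and since a good point re-enters the bad set with probability at most $6\Gamma_d$, the infinite product is a geometric series with ratio $6\Gamma_dK<1$ for $d$ large. This type-chain reduction is exactly the ``decoupling in high dimension'' you gesture at, but turned into an estimate; your proposal contains no analogue of it, so as written it does not establish the proposition.
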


\begin{proposition}\label{correlation}
	There exist $d_0$ and $\lambda_0$ such that for any $d > d_0$ and any $\lambda > \lambda_0$,
	\begin{equation*}
	\lim_{N \rightarrow \infty} \,\sup_{|x-y| > N^{1/2}}\, \sup_{0 \leq t \leq T}\,
	\mathrm{Cov}\, \big(\eta_t^N(x)^2,\,\eta_t^N (y)^2\big) =0.
	\end{equation*}
\end{proposition}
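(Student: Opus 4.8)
The plan is to work with the random-walk (Feynman--Kac) representation of the correlation functions of the linear system $\{\eta_t\}$ (Cf.\ \cite[Chapter~9]{liggettips}), which is also the tool behind Section~\ref{sec fourm}. For a finite multiset $A$ of sites, the mixed moment $u_s(A):=\mathbb{E}\big[\prod_{z\in A}\eta_s(z)\big]$ at real time $s$ can be written as $u_s(A)=\mathbb{E}_A\big[\exp\big(\int_0^s V(\xi_r)\,dr\big)\,g_0(\xi_s)\big]$, where $\{\xi_r\}$ is the system of $|A|$ interacting random walks obtained from $\mathcal{L}_N$ (each walk jumps at rate $\lambda$ across each edge, and from a site carrying $m$ walkers any $i\le m$ of them jump together to a fixed neighbour at rate $\lambda\binom{m}{i}$), $V\ge0$ is a potential that vanishes whenever every occupied site is simply occupied — and which, for a two-walker system, equals $1+2\lambda d$ exactly while the two walkers coincide — and $g_0(A)=\prod_z\mathbb{E}[\eta_0(O)^{m_z(A)}]$ encodes the i.i.d.\ initial law. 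Thus $\mathbb{E}[\eta_t^N(x)^2\eta_t^N(y)^2]=u_{tN^2}(\{x,x,y,y\})$ is run from four walkers, two at $x$ and two at $y$, whereas $\mathbb{E}[\eta_t^N(x)^2]\,\mathbb{E}[\eta_t^N(y)^2]=u_{tN^2}(\{x,x\})\,u_{tN^2}(\{y,y\})$ is run from two \emph{independent} two-walker systems started at $x$ and at $y$. Since $V$ is additive and $g_0$ multiplicative over clusters occupying disjoint sites, these two quantities agree on the event that no walker from $x$ ever shares a site with a walker from $y$.

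I would therefore couple the four-walker system with two independent two-walker systems $\xi^1,\xi^2$ (started at $\{x,x\}$, $\{y,y\}$), arranged to coincide up to the first time $\tau$ at which a walker of $\xi^1$ sits at the same site as a walker of $\xi^2$; let $L^i_r$ denote the time up to $r$ that the two walkers of $\xi^i$ spend together. Subtracting the two representations and cancelling the contribution of $\{\tau>tN^2\}$, on which they are identical, yields
\[
\mathrm{Cov}\big(\eta_t^N(x)^2,\eta_t^N(y)^2\big)
=\mathbb{E}\big[e^{\int_0^{tN^2}V(\xi_r)\,dr}g_0(\xi_{tN^2})\,\mathbf{1}_{\{\tau\le tN^2\}}\big]
-\mathbb{E}\big[e^{(1+2\lambda d)(L^1_{tN^2}+L^2_{tN^2})}g_0(\xi^1_{tN^2})g_0(\xi^2_{tN^2})\,\mathbf{1}_{\{\tau\le tN^2\}}\big].
\]
Applying the strong Markov property at $\tau$ to each term and bounding the post-$\tau$ evolution by the a priori moment estimates — $\sup_{s\le T,z}\mathbb{E}[\eta^N_s(z)^4]<\infty$ from Proposition~\ref{fourth moment} for the first term, and the second-moment bound $\sup_{s\le T,z}\mathbb{E}[\eta^N_s(z)^2]<\infty$ (valid for $\lambda>\tfrac1{2d(2\gamma_d-1)}$) for the second — and using that $\xi$ agrees with $(\xi^1,\xi^2)$ before $\tau$ so that the pre-$\tau$ weight is $e^{(1+2\lambda d)(L^1_\tau+L^2_\tau)}$ in both terms, the task reduces to showing
\[
\mathbb{E}\big[e^{(1+2\lambda d)(L^1_\infty+L^2_\infty)}\,\mathbf{1}_{\{\tau<\infty\}}\big]\longrightarrow0\qquad\text{uniformly over }t\le T,\ |x-y|>N^{1/2},
\]
where $L^i_\infty$ is the total coincidence local time of $\xi^i$.

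For this, I would first record that $L^1_\infty$ and $L^2_\infty$ are independent (they are functions of disjoint groups of walkers) and each is a geometric sum of i.i.d.\ exponential holding times, hence exponentially distributed; the second-moment bound gives $\mathbb{E}[e^{(1+2\lambda d)L^i_\infty}]<\infty$, and since the law is exponential this holds with room to spare — the exponential-moment threshold computes to $4\lambda d\gamma_d$, and the standing assumption $\lambda>\tfrac1{2d(2\gamma_d-1)}$ is precisely the strict inequality $1+2\lambda d<4\lambda d\gamma_d$. Fixing $p>1$ with $p(1+2\lambda d)<4\lambda d\gamma_d$, Hölder's inequality bounds the displayed expectation by $\big(\mathbb{E}[e^{p(1+2\lambda d)L^1_\infty}]\big)^{2/p}\mathbb{P}(\tau<\infty)^{1/q}$, $1/p+1/q=1$. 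It remains to bound $\mathbb{P}(\tau<\infty)$, which a union bound reduces to the probability that one fixed walker of $\xi^1$ and one fixed walker of $\xi^2$ ever meet. Up to their meeting time these two walkers evolve independently, and the embedded jump chain of their difference is a simple random walk on $\mathbb{Z}^d$ — each increment moves the difference to a uniformly random neighbour irrespective of which walker jumps or of the local occupation numbers. Since this walk starts at $x-y$ with $|x-y|>N^{1/2}$, transience in $d\ge3$ gives $\mathbb{P}(\text{it ever hits }0)\lesssim|x-y|^{-(d-2)}\le N^{-(d-2)/2}$, whence $\mathbb{P}(\tau<\infty)\lesssim N^{-(d-2)/2}\to0$. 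Combining the estimates proves the proposition; the hypotheses that $d$ and $\lambda$ be large are needed only to invoke Proposition~\ref{fourth moment}.

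The main obstacle is precisely the tension in the last two paragraphs: the Feynman--Kac weight $e^{\int V}$ is genuinely unbounded over times of order $N^2$, so it cannot simply be discarded upon intersecting with the rare event $\{\tau<\infty\}$. It is handled by (i) the strong Markov property at the first cross-group meeting, which trades the uncontrolled post-$\tau$ weight for an a priori moment bound, and (ii) Hölder's inequality with a spare exponent $p>1$, available because the two-walker coincidence local time is exponentially integrable strictly past the order $1+2\lambda d$ imposed by the potential — this last fact being exactly what the inequality $\lambda>\tfrac1{2d(2\gamma_d-1)}$ encodes. Making (ii) coexist with a genuine power of $\mathbb{P}(\tau<\infty)$ that still vanishes is the technical heart of the argument.
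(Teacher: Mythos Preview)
Your argument is correct and follows the same skeleton as the paper's: represent both the fourth mixed moment and the product of second moments via a Feynman--Kac/random-walk formula, observe they coincide on the event that the two pairs of walkers never meet, and then use H\"older's inequality together with transience in $d\ge3$ to extract a vanishing power of the cross-group meeting probability. The paper implements this with the discrete-time walk $\{S_n\}$ on $(\mathbb{Z}^d)^4$ from Section~\ref{sec fourm}, introducing a second matrix $\tilde{\mathbf{G}}\le\mathbf{G}$ for the product of second moments and noting equality when $\{x_1,x_2\}\cap\{x_3,x_4\}=\emptyset$; the H\"older step then appeals to Remark~\ref{rm2}, i.e.\ the $(1+\epsilon_0)$-moment of the full four-walker weight $\prod\mathbf{H}(S_i,S_{i+1})$. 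Your variant --- applying the strong Markov property at the first cross-group meeting time $\tau$ so that the post-$\tau$ contribution is absorbed by the a~priori fourth-moment bound and only the two-walker coincidence local times $L^1_\infty,L^2_\infty$ remain in the weight --- is a somewhat cleaner way to get the H\"older room: since $L^i_\infty$ is exponential with rate $4\lambda d\gamma_d$ (read off from the two-particle matrix $\mathbf{\Psi}$), the condition $p(1+2\lambda d)<4\lambda d\gamma_d$ holds for some $p>1$ precisely when $\lambda>\tfrac{1}{2d(2\gamma_d-1)}$, without needing the extra four-walker estimate of Remark~\ref{rm2}. Both routes still require Proposition~\ref{fourth moment}, so the largeness of $d$ and $\lambda$ is needed either way.
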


\begin{remark}\label{rm mcor}
	Recall that $\eta^N_t = \eta_{tN^2}$.  To make notations short, in Sections \ref{sec fourm} and \ref{sec cor}, we actually shall prove the following stronger results,
	\begin{align*}
	\sup_{0 \leq t < \infty}\,\mathbb{E}\, [\eta_t (x)^4] < \infty, \quad \text{and} \quad
	\lim_{N \rightarrow \infty} \,\sup_{|x-y| > N^{1/2}}\, \sup_{0 \leq t < \infty}\,
	\mathrm{Cov}\, \big(\eta_t (x)^2,\,\eta_t (y)^2\big) =0.
	\end{align*}
\end{remark}

\subsection{Tightness.}\label{subsec tight}   In this subsection, we prove the tightness of the sequence $\{\mathcal{Y}^N_t,\,0 \leq t \leq T\}_{N \geq 1}$. By Mitoma's Theorem  \cite{mitoma1983tightness}, it suffices to prove that for any $H \in \mathcal{S} (\mathbb{R}^d)$, the real-valued process $\{\mathcal{Y}^N_t (H),\,0 \leq t \leq T\}$ is tight. We shall prove the statement above by checking Aldous’ criterion (Cf. \cite[Chapter 4]{klscaling}).  More precisely, we need to show that
\begin{enumerate}[(i)]
	\item  for every $0 \leq t \leq T$,
	\begin{equation}\label{aldous1}
	\lim_{M \rightarrow \infty}\, \limsup_{N \rightarrow \infty}  \mathbb{P} \left( |\mathcal{Y}_t^N (H)| > M \right) = 0;
	\end{equation}
	
	\item for every $\delta > 0$,
	\begin{equation}\label{aldous2}
	\lim_{\gamma \rightarrow 0}\, \limsup_{N \rightarrow \infty}\, \sup_{\tau \in \mathfrak{T}_T,\,\theta \leq \gamma}\, \mathbb{P} \left( |\mathcal{Y}_\tau^N (H) - \mathcal{Y}_{\tau + \theta}^N (H)| > \delta \right) = 0,
	\end{equation}
	where $\mathfrak{T}_T$ is the set of stopping times with respect to the natural filtration bounded by $T$.
\end{enumerate}

\,

Before proving tightness, we first state two lemmas, which correspond to Lemmas 4.4 and 4.5 in \cite{xue2020hydrodynamics}. Since the proofs are omitted in \cite{xue2020hydrodynamics}, for completeness, we shall give them here. The first lemma is concerned about the second moment of  the occupation variable.

\begin{lemma}\label{fluclemma2}
	For any $0 < t \leq T$ and for any $x \in \mathbb{R}^d$,
	\begin{equation*}
	\lim_{N \rightarrow \infty}  \mathbb{E} \left[ \eta_{t}^N (x)^2 \right] = 1+1/h_\lambda,
	\end{equation*}
	where $h_\lambda$ is defined in \eqref{eqn1}.
\end{lemma}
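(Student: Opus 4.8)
The strategy is to reduce the statement to the large-time behaviour of a scalar renewal equation for the two-point function. Work with the un-rescaled process $\{\eta_t\}$ (recall $\eta^N_t=\eta_{tN^2}$) and, as throughout the paper, take $\lambda_1=\lambda_2=0$. Applying $\mathcal{L}_N$ to $\eta\mapsto\eta(x)$ one finds $\mathcal{L}_N\eta(x)=\lambda\Delta\eta(x)$, where here $\Delta$ denotes the \emph{discrete} Laplacian (the death term $-\eta(x)$ cancels the $+\eta(x)$ inside the drift $(1-2\lambda d)\eta(x)$), so $\mathbb{E}[\eta_t(x)]\equiv1$ since the initial coordinates are i.i.d.\ with mean $1$. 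By translation invariance of the dynamics and of the initial law, $\psi_t(z):=\mathbb{E}[\eta_t(O)\eta_t(z)]$ depends on $z$ only and $\mathbb{E}[\eta^N_t(x)^2]=\psi_{tN^2}(O)$, so it suffices to prove $\psi_s(O)\to1+1/h_\lambda$ as $s\to\infty$.

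Since the model is a linear system \cite[Chapter~9]{liggettips}, the second moments satisfy a closed linear system; computing $\mathbb{E}[\mathcal{L}_N(\eta(x)\eta(y))]$ in the cases $x=y$ and $x\neq y$ and using translation invariance gives the discrete heat equation with a self-reinforcing point source at the origin,
\[
\partial_t\psi_t(z)=2\lambda\,\Delta\psi_t(z)+(1+2\lambda d)\,\psi_t(O)\,\mathbf{1}\{z=O\},\qquad \psi_0(z)=1+(\beta_N-1)\mathbf{1}\{z=O\},
\]
with $\beta_N:=\mathbb{E}[\eta_0^N(O)^2]$, $\sup_N\beta_N<\infty$ (independence of distinct initial coordinates). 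Let $p_t(\cdot)$ be the transition kernel of the random walk generated by $2\lambda\Delta$ (simple-random-walk jumps at total rate $4\lambda d$). Duhamel's formula evaluated at $z=O$ turns this into the scalar renewal equation
\[
g_t:=\psi_t(O)=1+(\beta_N-1)p_t(O)+(1+2\lambda d)\int_0^t p_{t-s}(O)\,g_s\,ds.
\]
The convolution kernel $k(t):=(1+2\lambda d)p_t(O)$ has $\|k\|_{L^1}=(1+2\lambda d)\int_0^\infty p_t(O)\,dt=\frac{1+2\lambda d}{4\lambda d\,\gamma_d}$, because $\int_0^\infty p_t(O)\,dt$ equals the mean holding time $1/(4\lambda d)$ times the expected number of visits $1/\gamma_d$ of the transient simple random walk; the hypothesis $\lambda>\frac{1}{2d(2\gamma_d-1)}$ is exactly $\|k\|_{L^1}<1$. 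In this subcritical regime standard Volterra/renewal theory (resolvent $R=\sum_{n\ge1}k^{*n}\in L^1$, $g=b+R*b$ with $b_t=1+(\beta_N-1)p_t(O)$ bounded in $(t,N)$ and $b_t\to1$ as $t\to\infty$ uniformly in $N$, dominated convergence) gives $g_t\to 1/(1-\|k\|_1)$ as $t\to\infty$, uniformly in $N$; hence $\mathbb{E}[\eta^N_t(x)^2]=g_{tN^2}\to 1/(1-\|k\|_1)$ for each fixed $t>0$, and an elementary simplification using \eqref{eqn1} gives $\tfrac{1}{1-\|k\|_1}=\tfrac{4\lambda d\gamma_d}{4\lambda d\gamma_d-1-2\lambda d}=\tfrac{4\lambda d\gamma_d}{2\lambda d(2\gamma_d-1)-1}=1+1/h_\lambda$.

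The points requiring care are the exact coefficient $1+2\lambda d$ of the point source, which comes out only after carefully combining the death, infection and drift parts of $\mathcal{L}_N$ together with the identity $\mathbb{E}[\eta_t(x)]\equiv1$, and the evaluation $\int_0^\infty p_t(O)\,dt=1/(4\lambda d\gamma_d)$; the finiteness $\mathbb{E}[\eta_t(O)^2]<\infty$ and the validity of the ODE for the two-point function are routine consequences of linear-systems theory given the bounded (fourth, hence second) moment of $\eta_0(O)$. I expect the only genuinely analytic step to be converting the formal fixed-point relation into actual convergence of $g_t$, which is precisely where the subcriticality $\|k\|_{L^1}<1$ — equivalently the lower bound on $\lambda$ inherited from Theorem \ref{thm hl} — is used.
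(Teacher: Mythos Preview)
Your argument is correct and is a genuinely different route from the paper's. The paper works with the full two-point matrix $\mathbf{\Psi}$ (so that $\psi_t=e^{t\mathbf{\Psi}}\psi_0$), exhibits an explicit positive null-eigenfunction $\Lambda(x)=k(x)+h_\lambda$ of $\mathbf{\Psi}$ (here $k(x)$ is the hitting probability of the origin for simple random walk), and deduces the limit $1+1/h_\lambda=\Lambda(O)/h_\lambda$ by showing that the remainder $\sum_y e^{tN^2\mathbf{\Psi}}(O,y)k(y)/h_\lambda$ and the diagonal term $e^{tN^2\mathbf{\Psi}}(O,O)\,\mathrm{Var}(\eta_0^N(O))$ vanish, using $e^{t\mathbf{\Psi}}(x,y)\to0$ together with $\sup_{|y|>M}k(y)\to0$. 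You instead isolate the scalar equation for $g_t=\psi_t(O)$ via Duhamel and reduce to a Volterra renewal equation with kernel $k(t)=(1+2\lambda d)p_t(O)$, identify the hypothesis $\lambda>\frac{1}{2d(2\gamma_d-1)}$ with the subcriticality $\|k\|_{L^1}<1$, and read off the limit as $(1-\|k\|_{L^1})^{-1}$; the needed uniformity in $N$ indeed comes from $\sup_N\beta_N<\infty$, exactly as you note. Your approach is more elementary in that it avoids guessing the eigenfunction and makes the role of the lower bound on $\lambda$ entirely transparent; the paper's eigenfunction method, on the other hand, simultaneously controls the full off-diagonal two-point function and leads directly to the covariance decay $\mathrm{Cov}(\eta_t^N(x),\eta_t^N(y))\le C\,k(x-y)$ that is used in the companion Lemma~\ref{lemma 4.3}.
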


\begin{proof}
Since both the initial condition and the process are translation invariant, without loss of generality, we can take $x = O$ the origin.  By \cite[Lemma 2.3]{xue2020hydrodynamics},
\[
\mathbb{E} \left[ \eta_t^N (O)^2 \right]
=\sum_{y\in \mathbb{Z}^d} e^{tN^2\mathbf{\Psi}}(O,y)+e^{tN^2\mathbf{\Psi}}(O,O){\rm Var}\big(\eta_0^N (O)\big),
\]
where
\begin{equation*}
\mathbf{\Psi} (O,y) = \begin{cases}
2 \lambda, &\text{$y \sim O$},\\
1 - 2 \lambda d, &\text{$y = O$},\\
0, &\text{otherwise},
\end{cases}
\text{and if $x \neq O$},\quad \mathbf{\Psi} (x,y) = \begin{cases}
2 \lambda, &\text{$y \sim x$},\\
- 4 \lambda d, &\text{$y = x$},\\
0, &\text{otherwise}.
\end{cases}
\end{equation*}
As in the proof of \cite[Lemma 2.4]{xue2020hydrodynamics},  let  $\Lambda(x)=k(x)+h_\lambda$, $x\in \mathbb{Z}^d$, where $k (x)$ is the probability that a symmetric simple  random walk on $\mathbb{Z}^d$ hits the origin eventually when starting from $x$, then
\begin{equation*}
\Lambda(x)=\sum_{y\in \mathbb{Z}^d}e^{t \mathbf{\Psi}}(x,y)\Lambda(y) \quad \text{for any $t\geq 0$ and $x\in \mathbb{Z}^d$. }
\end{equation*}
Therefore,
	\begin{align}\label{equ 4.2}
	\mathbb{E} \left[ \eta_t^N (O)^2 \right]-\Lambda (O) / h_\lambda
	=e^{tN^2\mathbf{\Psi}}(O,O){\rm Var}\big(\eta_0^N (O)\big)-\sum_{y\in \mathbb{Z}^d}e^{tN^2\mathbf{\Psi}}(O,y)\frac{k(y)}{h_\lambda}.
	\end{align}
	By  \cite[Theorem 2.8.13]{liggettips},
	$
	\lim_{t\rightarrow+\infty}e^{t\Psi}(x,y)=0
	$ for all $x,\,y\in \mathbb{Z}^d$. Moreover, since the second moment of $\eta^N_0 (O)$ is bounded uniformly in $N$, the first term on the right-hand side of \eqref{equ 4.2} converges to zero as $N \rightarrow \infty$. Since $\Lambda (y) \geq h_\lambda$ for all $y$, the second term on the right-hand side of \eqref{equ 4.2} is bounded by
	\begin{equation*}
	\frac{\Lambda (O)}{h_\lambda^2}\,\sup_{|y| > M} k(y)  + \frac{1}{h_\lambda}\sum_{|y| \leq M} e^{tN^2 \mathbf{\Psi}} (O,y)
	\end{equation*}
	 for any $M > 0$. Since $\lim\limits_{M\rightarrow+\infty}\,\sup\limits_{|y|>M}\, k(y)=0$ when $d\geq 3$,  we finish the proof by first letting $N \rightarrow \infty$ and then $M \rightarrow \infty$.
\end{proof}

The next lemma states that the second moment of the density fluctuation field, acting on any test function $H$, is uniformly bounded in $N$ and $t$.

\begin{lemma}\label{lemma 4.3}
	For any $H \in \mathcal{S} (\mathbb{R}^d)$,
	\begin{equation}\label{fluceqn9}
	\sup_{0 \leq t \leq T} \,\sup_{N\geq 1} \, \mathbb{E} \left[  \mathcal{Y}_t^N (H)^2 \right] < \infty.
	\end{equation}
\end{lemma}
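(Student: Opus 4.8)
The plan is to reduce the estimate to a bound on the spatially homogeneous covariance kernel and then to use that, in dimension $d\ge 3$, this kernel is dominated by the lattice Green's function and hence is summable over a box of side $N$ only up to order $N^2$, which precisely matches the extra factor $N^2$ built into the normalization of $\mathcal{Y}^N_t$. Since under the Initial Condition (with $\lambda_1=0$) one has $\mathbb{E}[\eta^N_t(x)]\equiv 1$ and the law of $\eta^N_t$ is translation invariant, $\mathrm{Cov}(\eta^N_t(x),\eta^N_t(y))$ depends only on $z=x-y$; writing $g^N_t(z):=\mathrm{Cov}(\eta^N_t(O),\eta^N_t(z))$ one gets
\[
\mathbb{E}\big[\mathcal{Y}^N_t(H)^2\big]=\frac{1}{N^{2+d}}\sum_{x,y\in\mathbb{Z}^d}H(x/N)H(y/N)\,g^N_t(x-y).
\]

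Arguing exactly as in the proof of Lemma \ref{fluclemma2}, i.e.\ applying \cite[Lemma 2.3]{xue2020hydrodynamics} with the origin replaced by a general $z\in\mathbb{Z}^d$ and using the i.i.d.\ initial condition, I would obtain
\[
g^N_t(z)=\Big(\sum_{y\in\mathbb{Z}^d}e^{tN^2\mathbf{\Psi}}(z,y)-1\Big)+e^{tN^2\mathbf{\Psi}}(z,O)\,\mathrm{Var}\big(\eta^N_0(O)\big),
\]
and from this the two-sided bound
\[
0\le g^N_t(z)\le\frac{k(z)}{h_\lambda}+C_0\,e^{tN^2\mathbf{\Psi}}(z,O),\qquad C_0:=\sup_{N\ge 1}\mathrm{Var}\big(\eta^N_0(O)\big)<\infty,
\]
uniformly in $t\ge 0$ and $N\ge 1$. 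The inputs are: $e^{t\mathbf{\Psi}}$ is positivity preserving since the off-diagonal entries of $\mathbf{\Psi}$ are nonnegative; $\mathbf{\Psi}\mathbf{1}=(1+2\lambda d)\mathbf{1}_{\{O\}}\ge 0$, so $t\mapsto e^{t\mathbf{\Psi}}\mathbf{1}$ is coordinatewise nondecreasing, whence $\sum_y e^{t\mathbf{\Psi}}(z,y)\ge 1$; and $\mathbf{1}\le\Lambda/h_\lambda$ pointwise (recall $\Lambda=k+h_\lambda\ge h_\lambda$) together with the harmonicity $e^{t\mathbf{\Psi}}\Lambda=\Lambda$ already used in Lemma \ref{fluclemma2}, which gives $\sum_y e^{t\mathbf{\Psi}}(z,y)\le\Lambda(z)/h_\lambda=1+k(z)/h_\lambda$. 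Finiteness of $C_0$ is the only place the Initial Condition is needed.

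Inserting this bound, the contribution of the error term $C_0\,e^{tN^2\mathbf{\Psi}}(\cdot,O)$ is $O(N^{-2})$, because $\sum_z e^{tN^2\mathbf{\Psi}}(z,O)=(e^{tN^2\mathbf{\Psi}}\mathbf{1})(O)\le\Lambda(O)/h_\lambda$ is bounded (using symmetry of $\mathbf{\Psi}$) while $\sum_x|H(x/N)|=O(N^d)$. For the main term, writing $z=x-y$ and using that, since $H$ is Schwartz, $\sum_x|H(x/N)|\,|H((x-z)/N)|\le C_H\,N^d\,(1+|z|/N)^{-(d+1)}$, one is left with $\frac{C}{h_\lambda N^{2}}\sum_z k(z)\,(1+|z|/N)^{-(d+1)}$. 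Plugging in the classical estimate $k(z)\le C(1+|z|)^{2-d}$, valid for $d\ge 3$, and splitting the sum at $|z|=N$ (comparing each piece with an integral) gives $\sum_z k(z)(1+|z|/N)^{-(d+1)}=O(N^2)$, so the whole expression is $O(1)$, uniformly in $t\in[0,T]$ and $N$. This gives the lemma, and in fact the bound holds for all $d\ge 3$ and $\lambda>\frac{1}{2d(2\gamma_d-1)}$.

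The step I expect to be the main obstacle is establishing the pointwise covariance bound with both the right sign and the right size, i.e.\ $g^N_t(z)\ge 0$ and $g^N_t(z)\le k(z)/h_\lambda+(\text{error})$ simultaneously. A crude triangle-inequality split of the identity for $g^N_t(z)$ produces a term $\sum_y e^{tN^2\mathbf{\Psi}}(z,y)k(y)$ whose sum over $z$ in a box of side $N$ is of order $N^d$ rather than $N^2$; the cancellation that eliminates it is precisely the harmonicity $e^{t\mathbf{\Psi}}(k+h_\lambda)=k+h_\lambda$, which is where the hypothesis $\lambda>\frac{1}{2d(2\gamma_d-1)}$ (equivalently $h_\lambda>0$) enters.
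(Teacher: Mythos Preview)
Your proof is correct and follows essentially the same route as the paper: expand the square, bound $\mathrm{Cov}(\eta^N_t(x),\eta^N_t(y))$ pointwise by a constant times $k(x-y)$, invoke $k(z)\le C\|z\|_2^{2-d}$, and show the resulting weighted double sum is $O(1)$. The differences are tactical. The paper simply cites \cite[Proposition 2.1]{xue2020hydrodynamics} for the covariance bound $\mathrm{Cov}\le C\,k(x-y)$ and then passes directly to the Riesz-potential integral $\int\!\!\int\|u-v\|_2^{2-d}|H(u)||H(v)|\,du\,dv$, which is manifestly finite for Schwartz $H$. You instead rederive the covariance bound from scratch via the harmonicity $e^{t\mathbf{\Psi}}\Lambda=\Lambda$ (the same device used in Lemma~\ref{fluclemma2}), keep the transient piece $C_0\,e^{tN^2\mathbf{\Psi}}(\cdot,O)$ separate, and handle the final sum by the change of variable $z=x-y$ together with a split at $|z|=N$. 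Your version is more self-contained and makes the role of the hypothesis $h_\lambda>0$ explicit; the paper's version is shorter.
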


\begin{proof}
	According to the definition of $\mathcal{Y}_t^N (H)$,
\[
\mathbb{E} \left[  \mathcal{Y}_t^N (H)^2 \right]=\frac{1}{N^{2+d}}\sum_{x\in \mathbb{Z}^d}\sum_{y\in \mathbb{Z}^d}{\rm Cov}\left(\eta_t^N(x), \eta_t^N(y)\right)H\left(\frac{x}{N}\right)H\left(\frac{y}{N}\right).
\]
An analysis similar with that given in the proof of \cite[Prosition 2.1]{xue2020hydrodynamics} shows that, under our initial condition, there exists $C >0$ independent of $x$ and $y$ such that
\[
{\rm Cov}\left(\eta_t^N(x), \eta_t^N(y)\right)\leq C\, k(x-y), \quad \forall x,\,y.
\]
When $d\geq 3$, it is well-known that there exists a constant\footnote{Throughout the paper, the constant $C$ may be different from line to line.} $C >0$  such that $k(x)\leq C \left\|x\right\|_2^{2-d}$ for all $x$ (Cf.\,\cite{Lawler2010}),  where $\|\cdot\|_2$ is also used to denote the $l^2$-norm of a point since this causes no confusion.  Therefore,
\[
\mathbb{E} \left[  \mathcal{Y}_t^N (H)^2 \right]
\leq \frac{C}{N^{2d}}\sum_{x\in \mathbb{Z}^d}\sum_{y\neq x}\left\|\frac{x}{N}-\frac{y}{N}\right\|^{2-d}_2 \left|H\left(\frac{x}{N}\right)\right|\left|H\left(\frac{y}{N}\right)\right|
+\frac{C}{N^{2+d}}\sum_{x\in \mathbb{Z}^d}H\left(\frac{x}{N}\right)^2,
\]
where $C$ is a constant independent of $N$. Then, for sufficiently large $N$,
\[
\mathbb{E} \left[  \mathcal{Y}_t^N (H)^2 \right]
\leq C\int_{\mathbb{R}^d}\int_{\mathbb{R}^d}\left\|u-v\right\|_2^{2-d}\left|H\left(u\right)\right|\left|H\left(v\right)\right|\,du\,dv
+\frac{C}{N^2}\int_{\mathbb{R}^d}H\left(u\right)^2\,du,
\]
from which Lemma \ref{lemma 4.3} follows  directly.
\end{proof}

\,

Now we are ready to prove the tightness. By Dynkin's martingale, for any $H \in \mathcal{S} (\mathbb{R}^d)$,
\begin{align}
\mathcal{M}_t^N (H) &:= \mathcal{Y}_t^N (H) - \mathcal{Y}_0^N (H) - \int_{0}^{t} N^2 \mathcal{L}_N\mathcal{Y}_s^N (H) \,d s,\label{flucmartingale1}\\
\mathcal{N}_t^N (H) &:= \left[ \mathcal{M}_t^N (H) \right]^2 - \int_{0}^{t} \left\{ N^2 \mathcal{L}_N \mathcal{Y}_s^N (H)^2 - 2 \mathcal{Y}_s^N (H) N^2 \mathcal{L}_N \mathcal{Y}_s^N (H) \right\}\,d s\label{flucmartingale2}
\end{align}
are both martingales with respect to the natural filtration.  Direct calculations  yield that
\begin{equation}\label{fluceqn2}
N^2 \mathcal{L}_N \mathcal{Y}_s^N (H) = \lambda\, \mathcal{Y}_s^N (\Delta_N H),
\end{equation}
where $\Delta_N$ is the discrete Laplacian, $\Delta_N H (x/N) = N^2 [H((x+1)/N) + H((x-1)/N) - 2 H(x/N)]$, and
\begin{equation}\label{fluceqn3}
\begin{aligned}
&N^2 \mathcal{L}_N \mathcal{Y}_s^N (H)^2 - 2 \mathcal{Y}_s^N (H) N^2 \mathcal{L}_N \mathcal{Y}_s^N (H) \\
&\qquad = \frac{1}{N^{d}} \sum_{x \in \mathbb{Z}^d} \eta^N_{s}(x)^2 \left( H \left(\frac{x}{N}\right)^2 + \lambda \, \sum_{i=1}^d H \left(\frac{x \pm e_i}{N}\right)^2\right).
\end{aligned}
\end{equation}
It is easy to see that \eqref{aldous1} follows from the Markov inequality and Lemma \ref{lemma 4.3}.  To check \eqref{aldous2}, by \eqref{flucmartingale1}, we only need to deal with the martingale term and the integral term  respectively. For the martingale term,  by Eq. \eqref{fluceqn3},
\begin{align*}
&\mathbb{E}\big[\left(\mathcal{M}_{\tau+\theta}^{N}(H)-\mathcal{M}_{\tau}^{N}(H)\right)^{2}\big]\\
&=\frac{1}{N^d}\sum_{x\in \mathbb{Z}^d}\left(H\left(\frac{x}{N}\right)^2+\lambda\sum_{i=1}^dH\left(\frac{x\pm e_i}{N}\right)^2\right) \mathbb{E}\Big[ \int_\tau^{\tau+\theta} \eta^N_s(x)^2\,ds \Big]
\end{align*}
Hence, by Chebyshev's inequality, we only need to show that
\begin{equation}\label{equ tightEtoZero}
\lim_{\gamma\rightarrow0}\,\sup_{x\in \mathbb{Z}^d, N\geq 1, \tau\in\mathfrak{T}_T, \theta\leq \gamma}\,\mathbb{E}\Big[\int_\tau^{\tau+\theta}\eta^N_s(x)^2ds\Big]=0.
\end{equation}
By Proposition \ref{fourth moment} and H{\" o}lder's inequality,
\begin{align*}
&\mathbb{E}\Big[ \int_\tau^{\tau+\theta} \eta^N_s(x)^2\,ds\Big]=\mathbb{E}\Big[ \int_0^{T+\theta}\eta^N_s(x)^21_{\{\tau\leq s\leq \tau+\theta\}}\,ds\Big]\\
&=\int_0^{T+\theta} \mathbb{E} \big[  \eta^N_s(x)^21_{\{\tau\leq s\leq \tau+\theta\}} \big] \,ds\\
&\leq \int_0^{T+\theta}\sqrt{\mathbb{E}\big[ \eta_s^N(x)^4\big]}\sqrt{\mathbb{P}\big(\tau\leq s\leq \tau+\theta\big)}\,ds
\leq C\int_0^{T+\theta}\sqrt{\mathbb{P}\left(\tau\leq s\leq \tau+\theta\right)}\,ds\\
&\leq C\sqrt{T+\theta}\sqrt{\int_0^{T+\theta}\mathbb{P}\left(\tau\leq s\leq \tau+\theta\right)ds}
=C\sqrt{T+\theta}\sqrt{\mathbb{E}\Big[ \int_0^{T+\theta}1_{\{\tau\leq s\leq \tau+\theta\}} ds\Big]}\\
&=C\sqrt{T+\theta}\sqrt{\mathbb{E}\Big[ \int_{\tau}^{\tau+\theta}1\,ds\Big]}=C\sqrt{(T+\theta) \theta},
\end{align*}
from which  Eq.\,\eqref{equ tightEtoZero} follows and then the martingale term satisfies \eqref{aldous2}. For the integral term, by \eqref{fluceqn2},
\begin{equation*}
\mathbb{E} \left[ \left(\int_\tau^{\tau+\theta} \mathcal{Y}_s^N {(\Delta_N H)} ds\right)^2 \right] \leq \mathbb{E}  \left[ \theta \int_\tau^{\tau+\theta} \left(\mathcal{Y}_s^N {(\Delta_N H)} \right)^2 ds \right]
\leq     \theta \int_0^{T+\theta} \mathbb{E} \left[  \left(\mathcal{Y}_s^N {(\Delta_N H)} \right)^2 \right] ds.
\end{equation*}
Therefore, the integral term also satisfies \eqref{aldous2} by Lemma \ref{lemma 4.3}. This proves the tightness of the sequence $\{\mathcal{Y}^N_t,\,0 \leq t \leq T\}_{N \geq 1}$.

\subsection{Characterization of the limit.}   Since the sequence $\{\mathcal{Y}^N_t,\,0 \leq t \leq T\}_{N \geq 1}$ is tight, any subsequence further has a subsequence that converges. For short, we still denote the subsequence by $\{\mathcal{Y}^N_t\}$ and denote the limit by $\{\mathcal{Y}_t\}$.  Then the uniqueness of $\{\mathcal{Y}_t\}$ the limit follows directly from Lemmas \ref{lem1} and \ref{lem2} below.

\begin{lemma}\label{lem1}
If  $d$ and  $\lambda$ are large enough, then   for any function $H \in \mathcal{S} (\mathbb{R}^d)$ and for any $\epsilon > 0$,
\begin{equation}
\limsup_{N \rightarrow \infty}\, \mathbb{P} \Big( \sup_{0 \leq t \leq T} |\mathcal{Y}^N_t (H) - \mathcal{Y}^N_{t-} (H)| > \epsilon \Big) = 0.
\end{equation}
\end{lemma}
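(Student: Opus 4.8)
The plan is to establish the stronger statement that, with probability tending to one, \emph{every} jump of $t\mapsto\mathcal{Y}^N_t(H)$ on $[0,T]$ has size at most $\epsilon$, which follows from a first-moment bound on the number of large jumps. Following the convention of Section \ref{sec2} I take $\lambda_1=0$, so in the accelerated time scale the clock $Y_x$ rings at rate $N^2$ and each $U_{x,y}$ ($y\sim x$) at rate $\lambda N^2$, and almost surely no two clocks ring at the same instant. Since $t\mapsto\mathbb{E}[\eta^N_t(x)]$ is continuous (it solves a linear ODE), the jumps of $\mathcal{Y}^N_\cdot(H)$ coincide with those of $N^{-1-d/2}\sum_x\eta^N_t(x)H(x/N)$. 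At a ring of $Y_x$ at time $t$ only coordinate $x$ changes, dropping from $\eta^N_{t-}(x)$ to $0$, so $\Delta\mathcal{Y}^N_t(H)=-N^{-1-d/2}\eta^N_{t-}(x)H(x/N)$; at a ring of $U_{x,y}$ at time $t$ coordinate $x$ increases by $\eta^N_{t-}(y)$, so $\Delta\mathcal{Y}^N_t(H)=N^{-1-d/2}\eta^N_{t-}(y)H(x/N)$. Hence the event $\{\sup_{0\le t\le T}|\mathcal{Y}^N_t(H)-\mathcal{Y}^N_{t-}(H)|>\epsilon\}$ is contained in the union, over $x\in\mathbb{Z}^d$, of the events that $Y_x$ rings before $T$ at a time $t$ with $\eta^N_{t-}(x)|H(x/N)|>\epsilon N^{1+d/2}$, together with the union over $x$ and $y\sim x$ of the analogous events for $U_{x,y}$ with $\eta^N_{t-}(y)$.

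I would then bound each of these probabilities by the expected number of such rings. Compensating the marked jump process (a rate-$N^2$ Poisson clock has compensator $N^2\,dt$, and $\eta^N_{t-}(x)$ is predictable and agrees with $\eta^N_t(x)$ for Lebesgue-a.e.\ $t$ almost surely), for each $x$
\[
\mathbb{E}\Big[\#\big\{t\in Y_x\cap[0,T]:\ \eta^N_{t-}(x)\,|H(\tfrac{x}{N})| > \epsilon N^{1+d/2}\big\}\Big] \;=\; N^2\int_0^T \mathbb{P}\big(\eta^N_s(x)\,|H(\tfrac{x}{N})| > \epsilon N^{1+d/2}\big)\,ds .
\]
By Markov's inequality and Proposition \ref{fourth moment}, writing $C=\sup_{0\le s\le T,\,N\ge1}\mathbb{E}[\eta^N_s(O)^4]<\infty$, the integrand is at most $C\,|H(x/N)|^4\epsilon^{-4}N^{-4-2d}$, so the expectation above is at most $CT\,|H(x/N)|^4\epsilon^{-4}N^{-2-2d}$. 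Summing over $x\in\mathbb{Z}^d$ and using that $N^{-d}\sum_x|H(x/N)|^4\to\int_{\mathbb{R}^d}|H|^4\,du<\infty$ (a Riemann sum for the Schwartz function $|H|^4$), the total expected number of large $Y_x$-jumps is $O(\epsilon^{-4}N^{-2-d})$; the $U_{x,y}$-jumps contribute the same up to a factor $2d\lambda$. Therefore, by Markov applied to the total count,
\[
\mathbb{P}\Big(\sup_{0\le t\le T}|\mathcal{Y}^N_t(H)-\mathcal{Y}^N_{t-}(H)| > \epsilon\Big) \;\le\; \frac{C'(H,T,\lambda,d)}{\epsilon^{4}\,N^{2+d}} \;\xrightarrow[N\to\infty]{}\;0 ,
\]
which proves the lemma.

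The argument is soft once organized this way, so there is no serious obstacle; the one point worth emphasizing is why the hypothesis ``$d$ and $\lambda$ large'' is needed, and it enters \emph{only} through Proposition \ref{fourth moment}. With merely a uniform \emph{second} moment the same computation gives, per site, an expected count of order $|H(x/N)|^2N^{-d}$, whose sum over $\mathbb{Z}^d$ converges to a positive multiple of $\epsilon^{-2}\int_{\mathbb{R}^d}|H|^2\,du$ rather than to $0$; the fourth moment is what lets the $N^{-1-d/2}$ scaling of the field beat the $N^d$ sites with room to spare. The remaining technical points—continuity of $t\mapsto\mathbb{E}[\eta^N_t(x)]$ and the predictable-projection/Fubini justification of the compensation identity—are standard.
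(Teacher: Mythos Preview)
Your proof is correct and takes a genuinely different route from the paper's.

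The paper bounds the square of the maximal jump by a multiple of
\[
N^{-(d+2)}\,\sup_{0\le t\le T}\sum_{x}\eta^N_t(x)^2\Big(H(x/N)^2+\sum_{y\sim x}H(y/N)^2\Big),
\]
and then proves a separate claim that the process $t\mapsto N^{-(d+2)}\sum_x\eta^N_t(x)^2H(x/N)$ converges in $D([0,T],\mathbb{R})$ to the zero trajectory. That claim requires an auxiliary tightness argument: a Dynkin martingale decomposition for $\eta^2$, explicit generator computations yielding the terms $A_s$ and $B_s$, and a full verification of Aldous' criterion. Only then does continuity of the sup functional on Skorohod space give the conclusion.

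Your argument bypasses this machinery entirely. You control directly the expected number of clock rings that produce a jump exceeding $\epsilon$, via the predictable compensation formula and a fourth-moment Markov bound, and obtain the quantitative rate $O(\epsilon^{-4}N^{-2-d})$. This is shorter, more elementary, and gives an explicit decay rate rather than a bare $o(1)$. The paper's approach has the minor structural advantage that it reuses the Aldous-criterion framework already set up in Subsection~\ref{subsec tight}, so no compensator identity is needed; but for this lemma in isolation your method is cleaner. Both proofs invoke Proposition~\ref{fourth moment}, and only there, so the hypothesis on $d$ and $\lambda$ enters identically. Your remark that the second moment alone would be insufficient (the per-site bound would sum to a positive constant rather than vanish) is also a nice observation that the paper does not make explicit.
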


Since the mapping from $\omega \in D ([0,T],\mathbb{R})$ to $\sup_{0 \leq t \leq T} |\omega_t - \omega_{t-}|$ is continuous, from the above lemma,  the limit  $\{\mathcal{Y}_t\}$   is continuous.

\begin{proof}[Proof of Lemma \ref{lem1}]
	Observe that
	\begin{equation}\label{d4}
	\begin{aligned}
	&\sup_{0 \leq t \leq T} |\mathcal{Y}^N_t (H) - \mathcal{Y}^N_{t-} (H)|^2  \leq \sup_{x\in \mathbb{Z}^d,\, 0 \leq t \leq T}\, N^{-(d+2)}H (x/N)^2 \left( \eta_{t }^N (x) + \sum_{y \sim x} \eta^N_t (y) \right)^2\\
	&\qquad \leq (1 + 2 d) N^{-(d+2)}\,\sup_{0 \leq t \leq T}\, \sum_{x \in \mathbb{Z}^{d}} \eta^N_t (x)^2  \left( H (x/N)^2 + \sum_{y \sim x} H(y/N)^2\right).
	\end{aligned}
	\end{equation}
	
\,
	
	\begin{claim}
		For any $H \in \mathcal{S} (\mathbb{R}^d)$, the process $\big\{ N^{-(d+2)} \sum_{x \in \mathbb{Z}^{d}} \eta^N_t (x)^2  H (x/N) \big\}$ converges in distribution, as $N \rightarrow \infty$, with respect to the Skorohod topology of $D \left( [0,T], \mathbb{R} \right) $ to $\mathbf{0}$, the trajectory identical to zero on $0 \leq t \leq T$.
	\end{claim}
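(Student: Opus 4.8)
The plan is to show that the nonnegative process $Z^N_t := N^{-(d+2)}\sum_{x}\eta^N_t(x)^2 H(x/N)$ converges to $\mathbf 0$ in $D([0,T],\mathbb R)$ by first establishing convergence of the one-dimensional distributions to $0$, then upgrading to convergence of the whole trajectory via a tightness/uniform-smallness argument. For the fixed-time statement, I would compute the expectation directly: by translation invariance and Lemma \ref{fluclemma2} (or rather Proposition \ref{fourth moment} plus Lemma \ref{fluclemma2}), $\mathbb E[\eta^N_t(x)^2]$ is bounded uniformly in $x,t,N$ (it converges to $1+1/h_\lambda$), so
\[
\mathbb E\big[|Z^N_t|\big] \leq N^{-(d+2)}\Big(\sup_{x,t,N}\mathbb E[\eta^N_t(x)^2]\Big)\sum_{x}\big|H(x/N)\big| \leq C\, N^{-(d+2)}\cdot N^{d}\|H\|_{L^1} \to 0,
\]
since $\sum_x|H(x/N)|\sim N^d\int|H|$. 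Hence $Z^N_t\to 0$ in $L^1$, a fortiori in probability, for each fixed $t$, and since $|Z^N_t(\text{with }H)|\le Z^N_t(\text{with }|H|)$ the same holds for the absolute-value version; this already gives convergence of finite-dimensional distributions to those of $\mathbf 0$.

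To pass from finite-dimensional convergence to convergence in the Skorohod topology, I would prove tightness of $\{Z^N\}$ in $D([0,T],\mathbb R)$, or more directly show $\sup_{0\le t\le T}|Z^N_t|\to 0$ in probability, which immediately yields Skorohod convergence to $\mathbf 0$ and in passing proves exactly what \eqref{d4} needs. For the latter, I would use Dynkin's formula on $Z^N_t$ (with $H$ replaced by $|H|$ or a smooth majorant): writing $Z^N_t = Z^N_0 + \int_0^t N^2\mathcal L_N Z^N_s\,ds + \mathcal R^N_t$ with $\mathcal R^N$ a martingale. The drift term $N^2\mathcal L_N\big(N^{-(d+2)}\sum_x\eta(x)^2 H(x/N)\big)$ can be computed explicitly from \eqref{equ 1.1 generator}: the recovery part and the path-growth part produce terms of the form $N^{-d}\sum_x \eta^N_s(x)^2(\cdots)$ and $N^{-d-2}\sum_{x}\eta^N_s(x)\eta^N_s(y)(\cdots)$ with the prefactor $N^{-d-2}$ (not $N^{-d}$) on every surviving term after the diffusive cancellation in the Laplacian-type pieces — the key point being that the quadratic-in-$\eta$ generator acting on a quadratic observable, divided by $N^{d+2}$, leaves a residual of order $N^{-2}$ times bounded quantities. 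Using the uniform second-moment bound (Lemma \ref{fluclemma2}) and the uniform fourth-moment bound (Proposition \ref{fourth moment}) together with Cauchy--Schwarz to control the cross terms $\mathbb E[\eta^N_s(x)\eta^N_s(y)]\le (\sup\mathbb E[\eta^2])$, one bounds $\mathbb E\big[\int_0^T |N^2\mathcal L_N Z^N_s|\,ds\big]\le C/N^2\to 0$. For the martingale part, $\mathbb E[(\mathcal R^N_T)^2]$ equals $\int_0^T\mathbb E[N^2\mathcal L_N (Z^N_s)^2 - 2Z^N_s N^2\mathcal L_N Z^N_s]\,ds$, which is a sum of jump-square contributions each carrying a prefactor $N^{-2(d+2)}$ times $\sum_x\mathbb E[\eta^N_s(x)^4](\cdots)$; by Proposition \ref{fourth moment}, $\sum_x\mathbb E[\eta^N_s(x)^4]|H(x/N)|^{\text{(powers)}}\lesssim N^d$, giving $\mathbb E[(\mathcal R^N_T)^2]\lesssim N^{-2(d+2)}\cdot N^d = N^{-(d+4)}\to 0$. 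Then Doob's inequality gives $\sup_t|\mathcal R^N_t|\to 0$ in probability, $\sup_t Z^N_0$ is negligible by the fixed-time estimate at $t=0$, and the drift term is uniformly small, so $\sup_{0\le t\le T}|Z^N_t|\to 0$ in probability.

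The main obstacle — and the reason Proposition \ref{fourth moment} is invoked — is controlling the martingale bracket (and, to a lesser extent, the cross terms in the drift): the quadratic observable $\eta(x)^2$ means that a single Poisson jump at $x$ (a recovery resetting $\eta(x)\to 0$, or a path growth adding $\eta(y)$ to $\eta(x)$) changes $Z^N$ by an amount involving $\eta(x)^2$ or $(\eta(x)+\eta(y))^2-\eta(x)^2$, whose conditional second moment forces a fourth moment of $\eta$; without the uniform fourth-moment bound the bracket cannot be shown to vanish. Everything else is bookkeeping with the generator \eqref{equ 1.1 generator} and Riemann-sum comparisons $N^{-d}\sum_x f(x/N)\to \int f$. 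I would organize the write-up as: (1) the $L^1$ estimate for fixed $t$; (2) the Dynkin decomposition and the explicit $O(N^{-2})$ bound on the drift using Lemmas \ref{fluclemma2}, and (3) the $O(N^{-(d+4)})$ bound on the bracket using Proposition \ref{fourth moment}, concluding via Doob's maximal inequality.
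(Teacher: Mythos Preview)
Your fixed-time step (1) is correct and matches the paper. The martingale bracket estimate in step (3) is also essentially right (the correct order is $N^{-(d+2)}$ rather than $N^{-(d+4)}$, since the speedup factor $N^2$ must be retained, but either way it vanishes).

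The genuine gap is in step (2), the drift estimate. Your claim that ``after the diffusive cancellation in the Laplacian-type pieces'' the drift $N^2\mathcal{L}_N Z^N_s$ is $O(N^{-2})$ is false: the cancellation that produces a discrete Laplacian in \eqref{fluceqn2} is specific to the \emph{linear} observable $\eta(x)$ and does not survive for $\eta(x)^2$. A direct computation (exactly as in the paper) gives
\[
N^2\mathcal{L}_N Z^N_s \;=\; N^{-d}\sum_{x} H(x/N)\,A_s(x,\eta^N),\qquad
A_s(x,\eta)=(1-4\lambda d)\eta_s(x)^2+\lambda\sum_{y\sim x}\big[\eta_s(y)^2+2\eta_s(x)\eta_s(y)\big].
\]
Even after summing by parts in $x$ and using smoothness of $H$, a term $(1-2\lambda d)\,N^{-d}\sum_x H(x/N)\eta^N_s(x)^2$ survives, and by Lemma~\ref{fluclemma2} its $L^1$ norm is of order $1$, not $N^{-2}$. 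Hence $\int_0^T|N^2\mathcal{L}_N Z^N_s|\,ds$ is of order $T$ in $L^1$, and your route to $\sup_t|Z^N_t|\to 0$ via $|Z^N_0|+\int_0^T|\text{drift}|\,ds+\sup_t|\mathcal{R}^N_t|$ does not close.

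The paper avoids this by proving tightness via Aldous' criterion rather than uniform smallness: for the drift increment one bounds
\[
\mathbb{E}\Big[\Big(\int_\tau^{\tau+\theta} N^{-d}\sum_x H(x/N)A_s(x,\eta^N)\,ds\Big)^2\Big]
\;\le\; \gamma\int_0^{T+\gamma}\mathbb{E}\Big[\Big(N^{-d}\sum_x H(x/N)A_s(x,\eta^N)\Big)^2\Big]\,ds \;\le\; C\gamma,
\]
using Cauchy--Schwarz in time and Proposition~\ref{fourth moment} to bound $\mathbb{E}[A_s(x,\eta^N)^2]$ uniformly. Together with your fixed-time convergence and the vanishing of the bracket, this yields tightness; combined with convergence of finite-dimensional distributions to $\mathbf{0}$, the Claim follows. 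So the fix is simply to replace your direct $\sup_t$ argument by the Aldous-type increment bound on the drift.
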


\,

	Since the mapping from $\omega \in D \left( [0,T], \mathbb{R} \right)$ to $\sup_{0 \leq t \leq T}\, \omega_t$ is continuous, the last term in \eqref{d4} converges in probability to zero as $N \rightarrow \infty$, which proves the lemma.
	
	\,
	
	It remains to prove the claim. By Lemma \ref{fluclemma2}, for each $0 \leq t \leq T$,
	\begin{equation}\label{d5}
	\limsup_{N \rightarrow \infty}\, \mathbb{E}\, \big[N^{-(d+2)} \sum_{x \in \mathbb{Z}^{d}} \eta^N_t (x)^2  H(x/N) \big] = 0,
	\end{equation}
	which implies that $N^{-(d+2)} \sum_{x \in \mathbb{Z}^{d}} \eta^N_t (x)^2  H(x/N)$ converges in probability to zero for each fixed $0 \leq t \leq T$. Therefore, to conclude the proof, we only need to prove that the sequence of the process
	$$
	\big\{ N^{-(d+2)} \sum_{x \in \mathbb{Z}^{d}} \eta^N_t (x)^2  H(x/N),\, 0 \leq t \leq T\big\}_{N \geq 1}
	$$
	is tight.  The proof is  similar as that given  in Subsection \ref{subsec tight}.  We need the fourth moment to be uniformly bounded as given in  Proposition \ref{fourth moment}.  The familiar readers can skip it without much of loss.  We give the detailed proof of the tightness below for completeness.
	
	Now we prove the tightness by checking Aldous' two conditions.   By \eqref{d5}, Aldous' first condition \eqref{aldous1} is satisfied.  A similar computation as in Subsection \ref{subsec tight} shows that
	\begin{align*}
	N^{-(d+2)} \sum_{x \in \mathbb{Z}^{d}} \eta^N_t (x)^2  H(x/N)
	- N^{-(d+2)} \sum_{x \in \mathbb{Z}^{d}} \eta^N_0 (x)^2  H(x/N)
	- \int_0^t\,   N^{-d} \sum_{x \in \mathbb{Z}^{d}}\, H (x/N)  A_s (x,\eta^N)  \,ds
	\end{align*}
	is a martingale with quadratic variation given by
	\begin{align*}
	\int_{0}^t\,    N^{-2(d+1)} \sum_{x \in \mathbb{Z}^{d}}\, H(x/N)^2 B_s (x,\eta^N)   \,ds
	\end{align*}
	where
	\begin{align*}
	A_s (x,\eta) &= (1 - 4 \lambda d) \eta_s (x)^2 + \lambda \sum_{y \sim x}\, \big\{ \eta_s (y)^2 + 2 \eta_s (x) \eta_s (y) \big\},\\
	B_s (x,\eta) &= \eta_s (x)^4 + \lambda \sum_{y \sim x} \big(2\eta_s (x) \eta_s (y) + \eta_s (y)^2\big)^2.
	\end{align*}
	Therefore, for any stopping time $\tau$ bounded from above by $T$ and for any $\theta \leq \gamma$, by Cauchy-Schwartz inequality,
	\begin{align}
	&\mathbb{E} \Big[  \big(\int_\tau^{\tau+\theta}\,   N^{-d} \sum_{x \in \mathbb{Z}^{d}}\, H (x/N)  A_s (x,\eta^N)  \,ds\big)^2 \Big]\notag\\
	&\quad \leq \gamma \int_0^{T+\gamma}\,   \mathbb{E} \Big[\big(N^{-d} \sum_{x \in \mathbb{Z}^{d}}\, H (x/N)  A_s (x,\eta^N)\big)^2 \Big] \,ds \notag\\
	&\quad \leq  \gamma \int_{0}^{T + \gamma}\, \big(N^{-d} \sum_{x \in \mathbb{Z}^{d}} H (x/N) \big)\,\big(N^{-d} \sum_{x \in \mathbb{Z}^{d}} H (x/N) \mathbb{E} [A_s (x,\eta^N)^2]\big)\, ds. \label{d6}
	\end{align}
	By Proposition \ref{fourth moment}, $ \mathbb{E} [A_s (x,\eta^N)^2]$ is uniformly bounded in $N$ and $t$. Thus, the term \eqref{d6} converges to zero as $N \rightarrow \infty$ and then $\gamma \rightarrow 0$. This shows that the integral term satisfies Aldous' second condition \eqref{aldous2}.  For the martingale term, first note that  $ \mathbb{E} [B_s (x,\eta^N)]$ is uniformly bounded in $N$ and $t$ according to Proposition \ref{fourth moment}. Therefore, the quadratic variation of the martingale vanishes in the limit $N \rightarrow \infty$. Then it is easy to see that the martingale term also satisfies \eqref{aldous2}.  This completes the proof.
\end{proof}

\begin{lemma}\label{lem2}
	If  $d$ and  $\lambda$ are large enough, then for any function $H \in \mathcal{S} (\mathbb{R}^d)$, the initial value satisfies $\mathcal{Y}_0 (H) \equiv 0$. Moreover,  for any function $G \in C_c^\infty (\mathbb{R})$,
	\begin{align*}
	G (\mathcal{Y}_t (H)) - G (\mathcal{Y}_0 (H))
	- \lambda \int_{ 0 }^t  G' (\mathcal{Y}_s (H))    \mathcal{Y}_s (\Delta H) \,ds
	- \frac{1}{2}||H||_2^2 \,C(\lambda,d) \int_{ 0 }^t G'' ( \mathcal{Y}_s (H))\,ds
	\end{align*}
	is a martingale with respect to the natural filtration $\mathcal{F}_t = \sigma (\mathcal{Y}_s (H), s \leq t, H \in \mathcal{S} (\mathbb{R}^d))$.
\end{lemma}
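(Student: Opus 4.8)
The plan has two parts: showing $\mathcal{Y}_0\equiv 0$, and establishing the $G$‑martingale identity in the limit. The first is immediate: since $\{\eta^N_0(x)\}_x$ are i.i.d.\ with mean one and uniformly bounded fourth (hence second) moment, $\mathbb{E}[\mathcal{Y}^N_0(H)]=0$ and
\[
\mathrm{Var}\big(\mathcal{Y}^N_0(H)\big)=\frac{1}{N^{2+d}}\sum_{x\in\mathbb{Z}^d}H(x/N)^2\,\mathrm{Var}\big(\eta^N_0(x)\big)\le\frac{C}{N^{2}}\cdot\frac{1}{N^{d}}\sum_{x\in\mathbb{Z}^d}H(x/N)^2\xrightarrow[N\to\infty]{}0,
\]
so $\mathcal{Y}^N_0(H)\to 0$ in $L^2$ for every $H$; passing to the limit along the convergent subsequence forces $\mathcal{Y}_0(H)=0$ a.s., i.e.\ $\mathcal{Y}_0\equiv 0$.

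For the martingale, I would first record that $\mathbb{E}[\eta^N_t(x)]\equiv 1$ for all $t$ and $x$: it is independent of $x$ by translation invariance of the i.i.d.\ initial law and of the dynamics, and constant in $t$ by a direct computation from \eqref{equ 1.1 generator} with $\lambda_2=0$ (the expected value evolves by $\lambda N^2$ times the lattice Laplacian, which annihilates constant profiles). Hence $\bar\eta^N_t(x)=\eta^N_t(x)-1$, and for $\Phi(\eta):=G\big(N^{-(1+d/2)}\sum_x(\eta(x)-1)H(x/N)\big)$ one has $\Phi(\eta^N_s)=G(\mathcal{Y}^N_s(H))$ with \emph{no} time‑dependent recentering needed. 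By Dynkin's formula $\Phi(\eta^N_t)-\Phi(\eta^N_0)-\int_0^t N^2\mathcal{L}_N\Phi(\eta^N_s)\,ds$ is a martingale, and the main computation is a Taylor expansion of $G$ around $\mathcal{Y}^N_s(H)$ — each jump of the field having magnitude $O\big(N^{-(1+d/2)}\eta^N_s(\cdot)H(\cdot/N)\big)$ — which gives
\[
N^2\mathcal{L}_N\Phi(\eta^N_s)=\lambda\,G'(\mathcal{Y}^N_s(H))\,\mathcal{Y}^N_s(\Delta_N H)+\tfrac12\,G''(\mathcal{Y}^N_s(H))\,\mathfrak{Q}^N_s+\mathcal{E}^N_s,
\]
where $\mathfrak{Q}^N_s:=N^{-d}\sum_x\eta^N_s(x)^2\big(H(x/N)^2+\lambda\sum_{i=1}^d H((x\pm e_i)/N)^2\big)$ is precisely the quadratic‑variation integrand in \eqref{flucmartingale2}--\eqref{fluceqn3}. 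The first‑order terms collapse to $\lambda\,\mathcal{Y}^N_s(\Delta_N H)$ because, after a discrete summation by parts, they combine into $\lambda N^{-(1+d/2)}\sum_x\eta^N_s(x)\Delta_N H(x/N)$ and $\sum_x\Delta_N H(x/N)=0$ exactly (telescoping, using summability of $H(\cdot/N)$), so recentering $\eta^N_s(x)$ by $1$ costs nothing; the second‑order terms are exactly $\tfrac12 G''\mathfrak{Q}^N_s$ by \eqref{fluceqn3}; and $\mathcal{E}^N_s$ collects the cubic and higher remainders, for which $G\in C_c^\infty$ together with $\sup_{s\le T,N}\sup_x\mathbb{E}[\eta^N_s(x)^3]<\infty$ from Proposition \ref{fourth moment} yield $\sup_{0\le s\le T}\mathbb{E}|\mathcal{E}^N_s|\le C\,N^{2}\cdot N^{-3(1+d/2)}\sum_x|H(x/N)|^3\le C N^{-1-d/2}\to 0$.

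It then remains to pass to the limit along the tight subsequence, where by Lemma \ref{lem1} the limit $\mathcal{Y}$ lies in $C([0,T],\mathcal{S}'(\mathbb{R}^d))$. The drift integral is routine: $\Delta_N H\to\Delta H$ in $\mathcal{S}(\mathbb{R}^d)$ (with $\mathcal{Y}^N_s(\Delta_N H-\Delta H)\to0$ in $L^2$ by the covariance estimate used in Lemma \ref{lemma 4.3}), $G'$ is bounded continuous, and Lemma \ref{lemma 4.3} supplies $\sup_{s\le T,N}\mathbb{E}[\mathcal{Y}^N_s(\Delta_N H)^2]<\infty$, hence the uniform integrability to conclude $\int_0^t\lambda G'(\mathcal{Y}^N_s(H))\mathcal{Y}^N_s(\Delta_N H)\,ds\to\int_0^t\lambda G'(\mathcal{Y}_s(H))\mathcal{Y}_s(\Delta H)\,ds$. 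The crucial step is the law of large numbers
\[
\mathfrak{Q}^N_s\xrightarrow[N\to\infty]{L^1}(1+2\lambda d)\big(1+1/h_\lambda\big)\|H\|_2^2=C(\lambda,d)\,\|H\|_2^2\qquad\text{for each }s>0,
\]
whose expectation converges to this value by Lemma \ref{fluclemma2} together with $H((x\pm e_i)/N)^2\to H(x/N)^2$ (producing the factor $1+2\lambda d$), and whose variance $N^{-2d}\sum_{x,y}\mathrm{Cov}\big(\eta^N_s(x)^2,\eta^N_s(y)^2\big)(\cdots)$ tends to $0$ by splitting the double sum at $|x-y|=N^{1/2}$: the near‑diagonal part is $O(N^{-d/2})$ via the crude bound $\mathrm{Cov}(\eta^N_s(x)^2,\eta^N_s(y)^2)\le\sup_x\mathbb{E}[\eta^N_s(x)^4]$ from Proposition \ref{fourth moment}, and the far part is controlled uniformly in $s$ by Proposition \ref{correlation}. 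Combined with the uniform $L^2$ bound on $\mathfrak{Q}^N_s$ (again Proposition \ref{fourth moment}) and dominated convergence in $s\in[0,t]$, this gives $\int_0^t\tfrac12 G''(\mathcal{Y}^N_s(H))\mathfrak{Q}^N_s\,ds\to\tfrac12 C(\lambda,d)\|H\|_2^2\int_0^t G''(\mathcal{Y}_s(H))\,ds$. Finally, the Dynkin martingale $\mathcal{R}^N_t:=\Phi(\eta^N_t)-\Phi(\eta^N_0)-\int_0^t N^2\mathcal{L}_N\Phi(\eta^N_s)\,ds$ converges in distribution, jointly with $\mathcal{Y}^N$, to the process displayed in the statement (using the convergences above and $\int_0^t\mathcal{E}^N_s\,ds\to0$ in $L^1$); since $G,G',G''$ are bounded and Lemma \ref{lemma 4.3} and Proposition \ref{fourth moment} provide the requisite uniform integrability, the martingale property transfers to the limit by the standard test against bounded continuous functionals of $\{\mathcal{Y}_r:r\le s\}$.

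The main obstacle is the $L^1$ law of large numbers for $\mathfrak{Q}^N_s$, and within it pinning down the constant $C(\lambda,d)=(1+2\lambda d)(1+1/h_\lambda)$: the bookkeeping of which Taylor terms survive fixes this value, while the decay of $\mathrm{Var}(\mathfrak{Q}^N_s)$ relies entirely on Propositions \ref{fourth moment} and \ref{correlation} — exactly the inputs that require $d$ and $\lambda$ to be large.
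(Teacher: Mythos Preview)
Your proposal is correct and follows essentially the same route as the paper: Dynkin's martingale for $G(\mathcal{Y}^N_t(H))$, Taylor expansion of $N^2\mathcal{L}_N G$ into a first-order drift $\lambda G'\,\mathcal{Y}^N_s(\Delta_N H)$, a second-order term $\tfrac12 G''\,\mathfrak{Q}^N_s$, and a cubic remainder controlled via Proposition~\ref{fourth moment}; then the $L^1$ law of large numbers for $\mathfrak{Q}^N_s$ by splitting the covariance sum at $|x-y|=N^{1/2}$ and invoking Propositions~\ref{fourth moment} and~\ref{correlation} together with Lemma~\ref{fluclemma2}, and finally passage to the limit using uniform integrability. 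The only cosmetic difference is that you make the identity $\mathbb{E}[\eta^N_t(x)]\equiv 1$ explicit (so recentering is by a constant), whereas the paper leaves this implicit, and that the paper quotes \cite[Theorem~5.3]{whitt2007proofs} where you invoke the equivalent test against bounded continuous functionals.
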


The above lemma gives a martingale characterization of the limit. By Remark \ref{rm3}, the limit is uniquely determined.

\begin{proof}[Proof of Lemma \ref{lem2}]
	For any function $H \in \mathcal{S} (\mathbb{R}^d)$,  since initially $\{\eta^N_0 (x)\}_{x \in \mathbb{Z}^d}$ are i.i.d. and  the second moment is uniformly bounded, it is easy to check that ${\mathrm{Var}} (\mathcal{Y}_0^N (H))$ is of order $\mathcal{O} (N^{-2})$. Since $\mathcal{Y}_0^N (H)$ has mean zero, $\mathcal{Y}_0^N (H)$ converges to zero in probability as $N \rightarrow \infty$. Thus, $\mathcal{Y}_0 (H) \equiv 0$. Now fix $G \in C_c^\infty (\mathbb{R})$.  By Dynkin's martingale,
	\begin{equation}\label{m1}
	\mathsf{M}^N_t  := \mathsf{M}^N_t (G,H) := G (\mathcal{Y}_t^N (H)) - G (\mathcal{Y}_0^N (H)) - \int_{ 0 }^t\, N^2 \mathcal{L}_N G (\mathcal{Y}_s^N (H))\, ds
	\end{equation}
	is a martingale with respect to the natural filtration. Define $\mathsf{M}_t$ as
	\begin{align*}
	&\mathsf{M}_t:= G (\mathcal{Y}_t (H)) - G (\mathcal{Y}_0 (H))
	- \lambda \int_{ 0 }^t  G' (\mathcal{Y}_s (H))    \mathcal{Y}_s (\Delta H) \,ds\\
	& \qquad - \frac{1}{2}||H||_2^2 \,C(\lambda,d) \int_{ 0 }^t G'' ( \mathcal{Y}_s (H))\,ds,
	\end{align*}
	where $\{\mathcal{Y}_t\}$ is the limit of $\{\mathcal{Y}_t^N\}$ as we have mentioned above.
	
	To conclude the proof of the lemma, we only need to show that $\{\mathsf{M}_t\}_{t\geq 0}$ is a martingale. By \cite[Theorem 5.3]{whitt2007proofs}, this property will hold if we show that $\mathsf{M}^N_t$ converges to $\mathsf{M}_t$ in distribution for each fixed $0 \leq t \leq T$ and that $\{\mathsf{M}^N_t\}_{N\geq 1}$ are uniformly integrable for each $t\geq 0$.
	
	Direct calculations yield that
	\begin{align*}
	&N^2 \mathcal{L}_N G (\mathcal{Y}_s^N (H)) = N^2 \sum_{x \in \mathbb{Z}^{d}}\, \Big[G \big(\mathcal{Y}_s^N (H) - N^{- (1+d/2)} \eta^N_s (x) H (x/N)\big) - G (\mathcal{Y}_s^N (H))\Big] \\
	&\quad + \lambda N^2 \sum_{x \in \mathbb{Z}^{d}}\,\sum_{y \sim x}\, \Big[G \big(\mathcal{Y}_s^N (H) + N^{- (1+d/2)} \eta^N_s (y) H (x/N)\big) - G (\mathcal{Y}_s^N (H))\Big] \\
	&\quad + (1 - 2 \lambda d) N^2 \sum_{x \in \mathbb{Z}^{d}}\,
	G'  (\mathcal{Y}_s^N (H)) N^{-(1+d/2)} \eta^N_s (x) H(x/N).
	\end{align*}
	Using Taylor's expansion and rearranging the above terms,  we rewrite $N^2 \mathcal{L}_N G (\mathcal{Y}_s^N (H)) $ as
	\begin{equation}\label{d1}
	\lambda G' (\mathcal{Y}_s^N (H)) \mathcal{Y}_s^N (\Delta_N H)
	+ \frac{1}{2N^d}   \sum_{x \in \mathbb{Z}^{d}} G'' (\mathcal{Y}_s^N (H)) \eta^N_s (x)^2 \big( H(x/N)^2 + \lambda \sum_{y \sim x} H(y/N)^2 \big)
	\end{equation}
	plus an error term bounded from above by
	\begin{equation*}
	\frac{||G'''||_\infty }{6N^{1+3d/2}} \sum_{x \in \mathbb{Z}^{d}} H(x/N)^3 \big\{ \eta_s^N (x)^3 + \lambda \sum_{y \sim x} \eta_s^N(y)^3 \big\}.
	\end{equation*}
	From Proposition  \ref{fourth moment}, the above term converges in $L^1$ to zero as $N \rightarrow \infty$.
	 By Lemma \ref{lemma 4.3} and Proposition
	\ref{fourth moment}, the second moment of the term \eqref{d1} is uniformly bounded in $N$, thus the martingale $\mathsf{M}^N_t$ is uniformly integrable.
	
	To prove that $\mathsf{M}^N_t$ converges to $\mathsf{M}_t$ in distribution, first note that we can replace the discrete Laplacian $\Delta_N$ by continuous Laplacian $\Delta$ in the first term of \eqref{d1}.  Let
	\begin{align*}
	\bar{H}^2 (x/N) :=  H(x/N)^2 + \lambda \sum_{y \sim x} H(y/N)^2.
	\end{align*}
	To deal with the second term in \eqref{d1}, we first replace it by
	\begin{equation}\label{d7}
	\frac{1}{2N^d}   \sum_{x \in \mathbb{Z}^{d}} G'' (\mathcal{Y}_s^N (H)) \mathbb{E} [\eta^N_s (x)^2] \bar{H}^2 (x/N)
	\end{equation}
	since the second moment of the error term is bounded from above by
	\begin{align*}
	&\frac{||G''||_\infty^2 }{4 N^{2d}} \sum_{x,\,y\in \mathbb{Z}^{d}} \mathrm{Var} \big(\eta^N_s (x)^2,\eta^N_s (y)^2 \big) \bar{H}^2 (x/N) \bar{H}^2 (y/N)\\
	&\leq C\, \big(\sup_{|x-y| > N^{1/2}}\, \mathrm{Var} \big(\eta^N_s (x)^2,\eta^N_s (y)^2 \big)\big)\,
	\frac{||G''||_\infty^2 }{N^{2d}} \sum_{x,\,y\in \mathbb{Z}^{d}}  \bar{H}^2 (x/N) \bar{H}^2 (y/N)\\
	&+ C\,\big(\sup_{0 \leq t \leq T,\, N \geq 1} \mathbb{E} [\eta^N_t (x)^4]\big)\, \frac{||G''\bar{H}||_\infty^2 }{ N^{3d/2}} \sum_{x \in \mathbb{Z}^{d}} \bar{H}^2 (x/N) ,
	\end{align*}
	which converges to zero as $N \rightarrow \infty$ by Propositions \ref{fourth moment} and \ref{correlation}. Next, according to Lemma \ref{fluclemma2} we could replace \eqref{d7} by
	\begin{equation*}
	\frac{1+1/h_\lambda}{2N^d}   \sum_{x \in \mathbb{Z}^{d}} G'' (\mathcal{Y}_s^N (H))  \bar{H}^2 (x/N).
	\end{equation*}
	Finally, without much effort, we replace the above term by $(1/2) C (\lambda,d) ||H||_2^2  G'' (\mathcal{Y}_s^N (H)) $. In conclusion, we have shown that $\mathsf{M}^N_t$ equals
	\begin{equation*}
	G(\mathcal{Y}^N_t (H)) - G(\mathcal{Y}^N_0 (H)) - \int_{0}^t\, \lambda G' (\mathcal{Y}_s^N (H)) \mathcal{Y}_s^N (\Delta H) + \frac{1}{2} C (\lambda,d) ||H||_2^2  G'' (\mathcal{Y}_s^N (H)) \, ds
	\end{equation*}
	plus an error term which converges to zero in probability. Since the process $\{\mathcal{Y}^N_t\}$ converges to $\{\mathcal{Y}_t\}$ in distribution, the martingale $\mathsf{M}^N_t$ converges in distribution to $\mathsf{M}_t$, and the proof is completed.
\end{proof}

\section{Proof of Proposition \ref{fourth moment}}\label{sec fourm}

In this section, we will prove Proposition \ref{fourth moment}. To make our strategy of the proof easy to catch, we first in Subsection \ref{subsec secm}  revisit a lemma given in \cite{Gri1983} by Griffeath, which says that the second moment of the occupation variable is uniformly bounded. Then, we prove the uniform boundedness of the fourth moment in the rest subsections. Note that, throughout this section, $\{\eta_t\}_{t\geq 0}$ is the Markov process  with generator $\mathcal{L}_N$ given in \eqref{equ 1.1 generator} with $\lambda_1=\lambda_2=0$.

\subsection{A preliminary  illustration for second moments.}\label{subsec secm}

In this subsection, we reprove the following lemma which was already proved in \cite{Gri1983}.

\begin{lemma}\label{second moments}(Griffeath, 1983)
If $d\geq3, \lambda>\frac{1}{2d(2\gamma_d-1)}$ and $\eta_0(x)=1$ for all $x\in \mathbb{Z}^d$, then
\[
\sup_{t\geq 0}{\rm E}\left[\eta_t(O)^2\right]<+\infty.
\]
\end{lemma}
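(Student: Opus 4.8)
The plan is to exploit the linearity of the model, exactly as in the proof of Lemma~\ref{fluclemma2}, to write the second moment as an explicit matrix exponential, and then to control that exponential uniformly in $t$ by exhibiting a strictly positive function killed by the relevant matrix.

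The first step is to apply the generator $\mathcal{L}_N$ of \eqref{equ 1.1 generator} (with $\lambda_1=\lambda_2=0$) to the function $\eta\mapsto\eta(x)\eta(y)$. Because the model is linear, $\mathcal{L}_N$ sends such a function to a finite linear combination of functions of the same form, so the second moments satisfy a closed linear system; the only point to notice is that the diagonal case $x=y$ picks up an extra contribution from the binary term, since $(\eta(x)+\eta(y))^2-\eta(x)^2=2\eta(x)\eta(y)+\eta(y)^2$. Since both $\eta_0\equiv 1$ and the dynamics are translation invariant, $u_t(x,y):={\rm E}[\eta_t(x)\eta_t(y)]$ depends only on $x-y$, and writing $u_t(x,y)=\phi_t(x-y)$ the system closes into $\frac{d}{dt}\phi_t=\mathbf{\Psi}\phi_t$ with $\mathbf{\Psi}$ the very matrix appearing in the proof of Lemma~\ref{fluclemma2}. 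The linear-systems version of the Hille--Yosida theorem (\cite[Chapter~9]{liggettips}; see also \cite[Lemma~2.3]{xue2020hydrodynamics}) then yields $\phi_t=e^{t\mathbf{\Psi}}\phi_0$ with $\phi_0\equiv 1$, hence ${\rm E}[\eta_t(O)^2]=\sum_{y\in\mathbb{Z}^d}e^{t\mathbf{\Psi}}(O,y)$.

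It remains to bound this sum uniformly in $t$. Since the off-diagonal entries of $\mathbf{\Psi}$ are nonnegative, $e^{t\mathbf{\Psi}}(O,y)\geq 0$, so it suffices to produce $\Lambda:\mathbb{Z}^d\to\mathbb{R}$ with $\mathbf{\Psi}\Lambda\equiv 0$ and $c:=\inf_x\Lambda(x)>0$: then $e^{t\mathbf{\Psi}}\Lambda=\Lambda$, and $c\sum_y e^{t\mathbf{\Psi}}(O,y)\leq\sum_y e^{t\mathbf{\Psi}}(O,y)\Lambda(y)=\Lambda(O)$ for every $t\geq 0$. Following the proof of Lemma~\ref{fluclemma2}, I would take $\Lambda(x)=k(x)+h_\lambda$, with $k(x)$ the probability that the symmetric simple random walk started at $x$ ever hits $O$ and $h_\lambda$ as in \eqref{eqn1}: harmonicity of $k$ off $O$ gives $(\mathbf{\Psi}\Lambda)(x)=0$ for $x\neq O$, and a first-step decomposition at the origin (where the escape probability $\gamma_d$ enters) shows that the constant in \eqref{eqn1} is precisely the one making $(\mathbf{\Psi}\Lambda)(O)=0$ as well. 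For $d\geq 3$ one has $\gamma_d>\tfrac12$, and the hypothesis $\lambda>\tfrac1{2d(2\gamma_d-1)}$ is equivalent to $h_\lambda>0$; therefore $\Lambda\geq h_\lambda>0$, and, using $k(O)=1$, one gets $\sup_{t\geq 0}{\rm E}[\eta_t(O)^2]\leq\Lambda(O)/h_\lambda=1+1/h_\lambda<\infty$, in agreement with Lemma~\ref{fluclemma2}.

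Everything in the first step is soft once the linear-systems formalism of \cite[Chapter~9]{liggettips} (together with \cite[Theorem~2.8.13]{liggettips}) is invoked to legitimize the passage from the formal moment equation to the semigroup identity and to guarantee a priori finiteness of $\sum_y e^{t\mathbf{\Psi}}(O,y)$. The genuinely model-dependent ingredient, and the step I expect to be the crux, is the existence of the strictly positive $\mathbf{\Psi}$-invariant function $\Lambda$: its positivity is not automatic, and it is exactly what imposes both $d\geq 3$ and $\lambda>\tfrac1{2d(2\gamma_d-1)}$ (mere transience of the walk does not suffice --- one needs its return probability small relative to $\lambda$). Equivalently, $\sum_y e^{t\mathbf{\Psi}}(O,y)$ is an exponential moment of the local time of a random walk at the origin, and the constraint is that this exponential moment be finite. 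This is precisely the device that has no analogue for the fourth moment, which is why the subsequent subsections must instead set up a direct random-walk representation.
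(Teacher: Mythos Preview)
Your proof is correct but follows a genuinely different route from the paper's. You take $\Lambda(x)=k(x)+h_\lambda$, verify $\mathbf{\Psi}\Lambda=0$, and use $\inf\Lambda=h_\lambda>0$ to get $\sum_ye^{t\mathbf{\Psi}}(O,y)\leq\Lambda(O)/h_\lambda=1+1/h_\lambda$. This is precisely the positive-eigenvector argument the paper attributes to Liggett (\cite[Section~9.6]{liggettips}) and \emph{deliberately sets aside}. The paper instead writes $e^{t\mathbf{\Psi}}=e^{-4\lambda dt}e^{t(\mathbf{\Psi}+4\lambda d\mathbf{I})}$, expands the latter as a path sum, and interprets $\sum_y(\mathbf{\Psi}+4\lambda d\mathbf{I})^n(O,y)$ as ${\rm E}_O\big[\prod_{i=0}^{n-1}\mathcal{H}(\beta_i,\beta_{i+1})\big]$ for a random walk $\{\beta_n\}$ on $\mathbb{Z}^d$ that behaves as simple random walk off $O$ but carries an extra self-loop at $O$; the strong Markov property then reduces the bound to a geometric series with ratio $\frac{1+2\lambda d}{4\lambda d}+1-\gamma_d<1$, which after simplification gives the same constant $1+1/h_\lambda$. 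Your argument is shorter and, as you rightly note in your last paragraph, relies on a device that has no workable analogue for the $(\mathbb{Z}^d)^4\times(\mathbb{Z}^d)^4$ matrix $\mathbf{G}-8\lambda d\mathbf{I}$ governing fourth moments. That is exactly why Subsection~\ref{subsec secm} exists: it is a pedagogical warm-up, rehearsing on the easy second-moment problem the random-walk/multiplicative-functional machinery that the remaining subsections deploy for Proposition~\ref{fourth moment}. Your proof is valid for the lemma as stated, but it bypasses the methodological point of the subsection.
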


\proof[An alternative  proof of Lemma \ref{second moments}]

As we have mentioned in the proof of Lemma \ref{fluclemma2}, when $\eta_0(x)=1$ for all $x$,
\begin{align*}
\mathbb{E}\left[\eta_t(O)^2\right]=\sum_{x\in \mathbb{Z}^d}e^{t\mathbf{\Psi}}(O, x).
\end{align*}
Let $\Upsilon=\mathbf{\Psi}+4\lambda d \mathbf{I}_{\mathbb{Z}^d\times \mathbb{Z}^d}$, where $\mathbf{I}_{\mathbb{Z}^d\times \mathbb{Z}^d}$ is the $\mathbb{Z}^d\times \mathbb{Z}^d$ identity matrix, then
\begin{align*}
{\mathbb{E}}\left[\eta_t(O)^2\right]
&=e^{-4\lambda dt}\sum_{n=0}^{+\infty}\frac{t^n}{n!}\sum_{x\in \mathbb{Z}^d}\Upsilon^n(O, x)\\
&=e^{-4\lambda dt}\sum_{n=0}^{+\infty}\frac{t^n}{n!}\sum_{x_0, x_1,...,x_n\in \mathbb{Z}^d: \atop x_0=O}\prod_{i=0}^{n-1}\Upsilon(x_i,x_{i+1}).
\end{align*}
For any $x\in \mathbb{Z}^d$, we define $\mathfrak{m}(x)=2d$ if $x\neq O$ and $\mathfrak{m}(O)=2d+1$. Furthermore, we introduce the random walk $\{\beta_n\}_{n\geq 1}$ on $\mathbb{Z}^d$ such that, for each $n\geq 0$,
\[
P\big(\beta_{n+1}=y\big|\beta_n=x\big)=\frac{1}{2d}
\]
if $x\neq O$ and $y$ is a neighbor of $x$, while
\[
P\big(\beta_{n+1}=y\big|\beta_n=O\big)=\frac{1}{2d+1}
\]
if $y$ is a neighbor of $O$ or $y=O$. For any $x, y\in \mathbb{Z}^d$, we define $\mathcal{H}(x,y)=\frac{\Upsilon(x,y)\mathfrak{m}(x)}{4d\lambda}$.
Then, according to the definition of $\Upsilon$,
\begin{equation*}
\mathbb{E}\left[\eta_t(O)^2\right]=e^{-4\lambda dt}\sum_{n=0}^{+\infty}\frac{(4\lambda dt)^n}{n!}{\rm E}_O\left[\prod_{i=0}^{n-1}\mathcal{H}(\beta_i, \beta_{i+1})\right],
\end{equation*}
where $\mathrm{E}_O$ is the expectation corresponding to the law of the random walk $\{\beta_n\}$ starting from $O$.
By direct calculation,
\[
\mathcal{H}(\beta_i, \beta_{i+1})=
\begin{cases}
1 & \text{~if\quad} \beta_i\neq O,\\
\frac{2d+1}{2d} & \text{~if\quad} \beta_i=O\text{~and~}\beta_{i+1}\neq O,\\
\frac{(1+2\lambda d)(2d+1)}{4\lambda d} & \text{~if\quad} \beta_i=\beta_{i+1}=O.
\end{cases}
\]
Hence $\mathcal{H}(\beta_i, \beta_{i+1})\geq 1$ and
\begin{equation}\label{equ secondmoments representation}
\mathbb{E} \left[\eta_t(O)^2\right]\leq e^{-4\lambda dt}\sum_{n=0}^{+\infty}\frac{(4\lambda dt)^n}{n!}{\rm E}_O\left[\prod_{i=0}^{+\infty}\mathcal{H}(\beta_i, \beta_{i+1})\right]={\rm E}_O\left[\prod_{i=0}^{+\infty}\mathcal{H}(\beta_i, \beta_{i+1})\right].
\end{equation}
Let $\hat{\tau}=\inf\{n\geq 1: \beta_n=O\}$, then
\[
P\big(\hat{\tau}<+\infty\big|\beta_0\sim O\big)=1-\gamma_d.
\]
 Since $\mathcal{H}(\beta_i, \beta_{i+1})=1$ when $\beta_i\neq O$,
\begin{align*}
&{\rm E}_O\left[\prod_{i=0}^{\hat{\tau}-1}\mathcal{H}(\beta_i, \beta_{i+1}), \hat{\tau}<+\infty\right]\\
&={\rm E}_O\left[\prod_{i=0}^{\hat{\tau}-1}\mathcal{H}(\beta_i, \beta_{i+1}), \hat{\tau}=1\right]+{\rm E}_O\left[\prod_{i=0}^{\hat{\tau}-1}\mathcal{H}(\beta_i, \beta_{i+1}), 1<\hat{\tau}<+\infty\right]\\
&=\frac{1}{2d+1}\frac{(2d+1)(1+2\lambda d)}{4\lambda d}+\frac{2d}{2d+1}\frac{2d+1}{2d}(1-\gamma_d)=\frac{1+2\lambda d}{4\lambda d}+1-\gamma_d.
\end{align*}
Then, by strong Markov property,
\[
{\rm E}_O\left[\prod_{i=0}^{+\infty}\mathcal{H}(\beta_i, \beta_{i+1})\right]
=\sum_{k=0}^{+\infty}\left(\frac{1+2\lambda d}{4\lambda d}+1-\gamma_d\right)^k\gamma_d.
\]
When $\lambda>\frac{1}{2d(2\gamma_d-1)}$, $\frac{1+2\lambda d}{4\lambda d}+1-\gamma_d<1$ and hence the above formula is finite.  This proves the  lemma by Eq. \eqref{equ secondmoments representation}.
\qed

\,

Liggett recalls Lemma \ref{second moments} in \cite[Section 9.6]{liggettips} and proves it in a different way with that given in \cite{Gri1983}. Our above proof is inspired by that given by Liggett a lot. The main difference between our proof and Liggett's is that Liggett bounds $e^{t\mathbf{\Psi}}(O, x)$ from above by finding an positive eigenvector of $\mathbf{\Psi}$ with respect to eigenvalue $0$ while we give up this approach, for the reason that such an eigenvector is difficult to calculate when we deal with the fourth moment  later.

Our proof of Proposition \ref{fourth moment} follows a similar analysis with that in the above proof of Lemma \ref{second moments}, where a critical step is to bound elements of $e^{t\mathbf{G}}$ from above, where $\mathbf{G}$ is a $(\mathbb{Z}^d)^4\times (\mathbb{Z}^d)^4$ matrix and will be given in the next subsection. To complete this step, a random walk on $(\mathbb{Z}^d)^4$ will be introduced. For mathematical details, see later subsections.

\subsection{Preparation.}  Hereafter, we shall prove Proposition \ref{fourth moment}. Before proving the fourth moment is uniformly bounded, we first introduce some notation. According to similarities among the four coordinates of the point $(x,y,z,w) \in (\mathbb{Z}^d)^4$, we divide the points on $(\mathbb{Z}^d)^4$ into the following five types:
\begin{enumerate}[(i)]
	\item if a point has the form $(x,x,x,x)$ where $x \in \mathbb{Z}^d$, then we say the point is of {\it type I};
	
	\item if a point has the form $(x,x,x,y),\,(x,x,y,x),\,(x,y,x,x)$ or $(y,x,x,x)$, where $x \neq y$ and $x,\,y \in \mathbb{Z}^d$, then we say the point is of {\it type II};
	
	\item if a point has the form $(x,x,y,y),\,(x,y,x,y)$ or $(x,y,y,x)$, where $x \neq y$ and $x,\,y \in \mathbb{Z}^d$, then we say the point is of {\it type III};
	
	\item if a point has the form $(x,x,y,z),\,(x,y,x,z),\,(x,y,z,x),\,(y,x,x,z),\,(y,x,z,x)$ or $(y,z,x,x)$, where $x,\,y,\,z$ are pairwise different and $x,\,y,\,z \in \mathbb{Z}^d$, then we say the point is of {\it type IV};
	
		\item if a point has the form $(x,y,z,w)$, where $x,\,y,\,z,\,w$ are pairwise different and $x,\,y,\,z,\,w \in \mathbb{Z}^d$, then we say the point is of {\it type V}.
\end{enumerate}
 We call the points of type I to type IV {\it bad points}, and call those of type V {\it good points}. Denote by $\mathcal{T} (\mathbf{x})$ the type of a point $\mathbf{x} \in (\mathbb{Z}^d)^4$.  More precisely, if $\mathbf{x}$ is of type I, then we set $\mathcal{T} (\mathbf{x}) = 1$, and the remaining four cases are similar. Let
\begin{equation*}
\mathbf{F}_t (x,y,z,w) := \mathbb{E}\, [\eta_t (x)\eta_t (y)\eta_t (z)\eta_t (w)], \quad \forall x,y,z,w \in \mathbb{Z}^d.
\end{equation*}
Then it can be checked directly that
\begin{equation}\label{a1}
\frac{d}{dt} \mathbf{F}_t (x,y,z,w) = (\mathbf{G} - 8 \lambda d \mathbf{I})\, \mathbf{F}_t (x,y,z,w),
\end{equation}
where $\mathbf{I}$ is the $(\mathbb{Z}^d)^4 \times (\mathbb{Z}^d)^4$ identical matrix, and the detailed form of the matrix $\mathbf{G}$ will be given later.

\,

Equation \eqref{a1} can be considered as an execution of the calculation
\begin{equation}\label{Hille-Yosida}
\frac{d}{dt}T_N (t)f(\eta)=T_N (t)\mathcal{L}_N f(\eta),
\end{equation}
where $\{T_N(t)\}_{t\geq 0}$ and $\mathcal{L}_N$ are respectively semi-group and generator of our model with parameters $\lambda_1=\lambda_2=0$ and $f(\eta) =
\eta(x)\eta(y)\eta(z)\eta(w)$. Such a calculation is rigorous according to \cite[Theorem 9.3.1]{liggettips}. Readers familiar with the theory of linear systems may point out that \cite[Theorem 9.3.1]{liggettips} only makes \eqref{Hille-Yosida} hold for $f$ with the form $f(\eta)=\eta(x)\eta(y)$, i.e., second moments instead of four moments. However, we can consider $\{\eta_t(x)\eta_t(y):~x\in \mathbb{Z}^d, y\in \mathbb{Z}^d, t\geq 0\}$ as a linear system with state space
$
[0, +\infty)^{\mathbb{Z}^d\times \mathbb{Z}^d}=[0, +\infty)^{\mathbb{Z}^{2d}}.
$
When we utilize \cite[Theorem 9.3.1]{liggettips} on such a linear system, we obtain exactly Equation \eqref{a1}.

\,

Now we give the explicit form of  $\mathbf{G}$,  which depends on the types of the point $(x,y,z,w)$.
\begin{enumerate}[(i)]
	\item The point $(x,y,z,w)$ is of type I, i.e., $(x,y,z,w) = (x,x,x,x)$ for some $x \in \mathbb{Z}^d$. Then
	$$
	\mathbf{G} ((x,x,x,x),(x,x,x,x)) = 3,
	$$
	and
	$$
	\mathbf{G} ((x,x,x,x),(x',y',z',w')) = \lambda
	$$
	 if $(x',y',z',w')$ equals
	 $$(y,y,y,y),$$
	 or
	 $$
	 (y,x,x,x), \,(x,y,x,x),\, (x,x,y,x),\, (x,x,x,y),
	 $$
	 or
	 $$(y,y,x,x),\,(y,x,y,x),\,(y,x,x,y),\,(x,y,y,x),\,(x,y,x,y),\,(x,x,y,y),$$
	 or
	 $$(y,y,y,x),\,(y,y,x,y),\,(y,x,y,y),\,(x,y,y,y),$$
	  where $y \sim x$, and $\mathbf{G} ((x,x,x,x),(x',y',z',w')) = 0$ otherwise.  Therefore,
	  \begin{equation*}
	  |\{(x',y',z',w'):   \mathbf{G} ((x,x,x,x),(x',y',z',w')) \neq 0 \}| = 30d+1.
	  \end{equation*}
	
	  \item The point $(x,y,z,w)$ is of type II.  Let us take $(x,y,z,w) = (x,x,x,y)$, $y \neq x$ as an example.  Then
	  $$
	  \mathbf{G} ((x,x,x,y),(x,x,x,y)) = 2,
	  $$
	  and
	  $$
	  \mathbf{G} ((x,x,x,y),(x',y',z',w')) = \lambda
	  $$
	  if $(x',y',z',w')$ equals
	  $$(x,x,x,u), \, u \sim y \qquad \text{or} \qquad (v,v,v,y), \, v \sim x,$$
	  or
	  $$
	  (u,u,x,y),\,(u,x,u,y),\,(x,u,u,y),\,u \sim x,
	  $$
	  or
	  $$
	  (u,x,x,y),\,(x,u,x,y),\,(x,x,u,y),\,u \sim x,
	  $$
	  and $\mathbf{G} ((x,x,x,y),(x',y',z',w')) = 0$ otherwise. The expression for other  type II points is similar.  Therefore,
	  \begin{equation*}
	  |\{(x',y',z',w'):   \mathbf{G} ((x,x,x,y),(x',y',z',w')) \neq 0 \}| = 16d+1.
	  \end{equation*}

	   \item The point $(x,y,z,w)$ is of type III.  Let us take $(x,y,z,w) = (x,x,y,y)$, $y \neq x$ as an example.  Then
	  $$
	  \mathbf{G} ((x,x,y,y),(x,x,y,y)) = 2,
	  $$
	  and
	  $$
	  \mathbf{G} ((x,x,y,y),(x',y',z',w')) = \lambda
	  $$
	  if $(x',y',z',w')$ equals
	  $$
	  (u,u,y,y),\,(u,x,y,y),\,(x,u,y,y),\,u \sim x,
	  $$
	  or
	  $$
	  (x,x,v,v),\,(x,x,v,y),\,(x,x,y,v),\,v \sim y,
	  $$
	  and $\mathbf{G} ((x,x,y,y),(x',y',z',w')) = 0$ otherwise. The expression for other  type III points is similar.  Therefore,
	  \begin{equation*}
	  |\{(x',y',z',w'):   \mathbf{G} ((x,x,y,y),(x',y',z',w')) \neq 0 \}| = 12d+1.
	  \end{equation*}

	  \item The point $(x,y,z,w)$ is of type IV.  Let us take $(x,y,z,w) = (x,x,y,z)$  as an example, where $x,y,z$ are pairwise different.  Then
	  $$
	  \mathbf{G} ((x,x,y,z),(x,x,y,z)) = 1,
	  $$
	  and
	  $$
	  \mathbf{G} ((x,x,y,z),(x',y',z',w')) = \lambda
	  $$
	  if $(x',y',z',w')$ equals
	  $$
	  (x,x,u,z),\,u \sim y \qquad \text{or} \qquad  (x,x,y,v),\,v \sim z,
	  $$
	  or
	  $$
	  (u,u,y,z),\,(u,x,y,z),\,(x,u,y,z),\,u \sim x,
	  $$
	  and $\mathbf{G} ((x,x,y,z),(x',y',z',w')) = 0$ otherwise. The expression for other   type IV points is similar. Therefore,
	  \begin{equation*}
	  |\{(x',y',z',w'):   \mathbf{G} ((x,x,y,z),(x',y',z',w')) \neq 0 \}| = 10d+1.
	  \end{equation*}

	  \item The point $(x,y,z,w)$ is of type V, i.e.,  $x,y,z,w$ are pairwise different.  Then
	  $$
	  \mathbf{G} ((x,y,z,w),(x',y',z',w')) = \lambda
	  $$
	  if $(x',y',z',w')$ equals
	  $$
	  (u,y,z,w),\,u \sim x \quad \text{or} \quad  (x,u,z,w),\,u \sim y \quad \text{or} \quad  (x,y,u,w),\,u \sim z
	  \quad \text{or} \quad  (x,y,z,u),\,u \sim w,
	  $$
	  and $\mathbf{G} ((x,y,z,w),(x',y',z',w')) = 0$ otherwise.  Therefore,
	  \begin{equation*}
	  |\{(x',y',z',w'):   \mathbf{G} ((x,y,z,w),(x',y',z',w')) \neq 0 \}| = 8d.
	  \end{equation*}
\end{enumerate}
Next we introduce a random walk $\{S_n\}_{n \geq 0}$ on $(\mathbb{Z}^d)^4$. For $\mathbf{x} \in (\mathbb{Z}^d)^4$,  let
\begin{equation*}
M (\mathbf{x}) =
\begin{cases}
30d+1 &\text{if $\mathbf{x}$ is type I},\\
16d+1 &\text{if $\mathbf{x}$ is type II},\\
12d+1 &\text{if $\mathbf{x}$ is type III},\\
10d+1 &\text{if $\mathbf{x}$ is type IV},\\
8d &\text{if $\mathbf{x}$ is type V}.
\end{cases}
\end{equation*} The transition probability $p (\cdot,\cdot)$ of the random walk $(S_n)_{n \geq 0}$ is given by
\begin{equation*}
p(\mathbf{x},\mathbf{y}) =  1/M(\mathbf{x}) \quad \text{if \quad $G (\mathbf{x},\mathbf{y}) \neq 0$},
\end{equation*}
and $p(\mathbf{x},\mathbf{y}) = 0$ otherwise.

\begin{remark}
	If $\lambda_1 \neq 0$, we need to introduce an asymmetric random walk. More precisely, if $\mathbf{G} (\mathbf{x},\mathbf{y}) \neq 0$, then
	\begin{equation*}
	p(\mathbf{x},\mathbf{y}) =
	\begin{cases}
	\frac{\mathbf{G} (\mathbf{x},\mathbf{y})}{\lambda M(\mathbf{x})} &\text{if $\mathbf{y} \neq \mathbf{x}$},\\
	1/M(\mathbf{x}) &\text{if $\mathbf{y} = \mathbf{x}$},
	\end{cases}
	\end{equation*}
and  if $\mathbf{G} (\mathbf{x},\mathbf{y}) = 0$, then $p(\mathbf{x},\mathbf{y}) = 0$.
\end{remark}

\subsection{Upper bound on the fourth moment.} Using \eqref{a1}, for any $x \in \mathbb{Z}^d$,
\begin{align*}
\mathbb{E} [\eta_t (x)^4] &= \mathbf{F}_t (x,x,x,x) = e^{t(\mathbf{G} - 8 \lambda d \mathbf{I})} \mathbf{F}_0 (x,x,x,x)\\
&= e^{-8 \lambda d t} \sum_{\mathbf{y} \in (\mathbb{Z}^d)^4} e^{t \mathbf{G}} ((x,x,x,x),\mathbf{y}) \mathbf{F}_0 (\mathbf{y}).
\end{align*}
Since $\sup_{\mathbf{y}\in (\mathbb{Z}^d)^4}\mathbf{F}_0 (\mathbf{y}) \leq C$ for some $C>0$ from the initial condition, the last formula is at most
\begin{align*}
&Ce^{-8 \lambda d t} \sum_{\mathbf{y} \in (\mathbb{Z}^d)^4} e^{t \mathbf{G}} ((x,x,x,x),\mathbf{y})
=Ce^{-8 \lambda d t} \sum_{\mathbf{y} \in (\mathbb{Z}^d)^4} \sum_{n=0}^{\infty} \frac{t^n}{n!} \mathbf{G}^n ((x,x,x,x), \mathbf{y})\\
&=Ce^{-8 \lambda d t}  \sum_{n=0}^{\infty} \frac{t^n}{n!}\, \sum_{\mathbf{x}_0,\mathbf{x}_1,\ldots,\mathbf{x}_n, \atop \mathbf{x}_0 = (x,x,x,x)} \,\prod_{i=0}^{n-1} \mathbf{G} (\mathbf{x}_i,\mathbf{x}_{i+1})\\
&=Ce^{-8 \lambda d t}  \sum_{n=0}^{\infty} \frac{(8\lambda d t)^n}{n!}\, \sum_{\mathbf{x}_0,\mathbf{x}_1,\ldots,\mathbf{x}_n, \atop \mathbf{x}_0 = (x,x,x,x)} \,\prod_{i=0}^{n-1} \frac{1}{M (\mathbf{x}_i)} \,\prod_{i=0}^{n-1} \frac{M (\mathbf{x}_i) \mathbf{G} (\mathbf{x}_i,\mathbf{x}_{i+1})}{8 \lambda d}.
\end{align*}
Note that the second summation above over $\mathbf{x}_0,\mathbf{x}_1,\ldots,\mathbf{x}_n$ such that $\mathbf{x}_0 = (x,x,x,x)$ can be rewritten as
\begin{equation*}
\mathrm{E}_{(x,x,x,x)} \left[ \prod_{i=0}^{n-1} \frac{M (S_i) \mathbf{G} (S_i,S_{i+1})}{8 \lambda d} \right],
\end{equation*}
which is bounded from above by
\begin{equation*}
\mathrm{E}_{(x,x,x,x)} \left[ \prod_{i=0}^{n-1} \mathbf{H} (S_i,S_{i+1}) \right],
\end{equation*}
where
\begin{equation*}
\mathbf{H} (\mathbf{x},\mathbf{y}) =
\begin{cases}
 \frac{M(\mathbf{x}) \mathbf{G} (\mathbf{x},\mathbf{y})}{8 \lambda d} \bigvee 1  &\text{if $\mathbf{x}$ is a bad point and $\mathbf{y} = \mathbf{x}$},\\
\frac{M(\mathbf{x}) \mathbf{G} (\mathbf{x},\mathbf{y})}{8 \lambda d} &\text{otherwise}.
\end{cases}
\end{equation*}
Note that $\mathbf{H} (S_i,S_{i+1}) \geq 1$ a.s. Therefore,
\begin{equation*}
\mathbb{E} [\eta_t (x)^4]  \leq C\mathrm{E}_{(x,x,x,x)} \left[ \prod_{i=0}^{\infty} \mathbf{H} (S_i,S_{i+1}) \right].
\end{equation*}

\subsection{Finiteness of the infinite-product expectation.} It remains to prove that
\begin{equation}\label{a6}
\mathrm{E}_{(x,x,x,x)} \left[ \prod_{i=0}^{\infty} \mathbf{H} (S_i,S_{i+1}) \right] < \infty.
\end{equation}
 If $\mathbf{x}$ is a good point, then
\begin{equation}\label{a5}
\mathrm{P}_{\mathbf{x}}\, (\text{$S_n$ is a bad point for some $n$}) \leq 6\, \Gamma_d,
\end{equation}
where $\Gamma_d = 1 - \gamma_d$ is the return probability of a simple random walk on $\mathbb{Z}^d$ starting from the origin. Note that the trajectories of the random walk $\{S_n\}_{n \geq 0}$ alternate between bad points and good points.  Moreover, if $d$ is large enough such that $6 \Gamma_d < 1$, then, with probability one,  there exists $n_0$ such that $S_n$ is a good point for all $n > n_0$. Since $\mathbf{H} (S_i,S_{i+1}) = 1$ a.s.\,if $S_i$ is a good point,  there are only finite terms strictly larger than one in the infinite product appearing in \eqref{a6}. Based on the above observations, it is easy to see that \eqref{a6} follows directly from the strong Markov  property and the following lemma.

\begin{lemma}\label{lemma 3.3}
There exist $d_0$ and $\lambda_0$ such that for all $d > d_0$ and for all $\lambda > \lambda_0$, there exists a constant $K = K(\lambda,d)$ such that $6 \,\Gamma_d\,K < 1$ and that
\begin{equation*}
\sup_{\text{$\mathbf{x}$ is a bad point}} \mathrm{E}_{\mathbf{x}} \Big[\,\prod_{i=0}^{\tau - 1} \mathbf{H} (S_i,S_{i+1})\,\Big] \leq K,
\end{equation*}
where
\begin{equation*}
\tau = \inf \{n > 0: S_n \,\, \text{is a good point}\}.
\end{equation*}
\end{lemma}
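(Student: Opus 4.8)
The plan is to run a first-step (renewal) analysis on the \emph{type} of the random walk $\{S_n\}$. Two simplifications make this tractable. First, once $\lambda$ is large enough that $M(\mathbf{x})\mathbf{G}(\mathbf{x},\mathbf{x}) \le 8\lambda d$ for every bad point $\mathbf{x}$ (any $\lambda \ge 12$ works, uniformly in $d$), every self-loop has weight $\mathbf{H}(\mathbf{x},\mathbf{x}) = 1$. Second, for a genuine move $\mathbf{y} \ne \mathbf{x}$ one has $\mathbf{G}(\mathbf{x},\mathbf{y}) = \lambda$ and $\mathbf{H}(\mathbf{x},\mathbf{y}) = M(\mathbf{x})/(8d)$, so that $p(\mathbf{x},\mathbf{y})\,\mathbf{H}(\mathbf{x},\mathbf{y}) = 1/(8d)$, independently of $\mathbf{x},\mathbf{y}$. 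Thus the only data needed about $\mathbf{G}$ are the numbers $n_{jk}$ of genuine-move targets of type $k$ issuing from a point of type $j$.

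First I would read these counts off the description of $\mathbf{G}$. Up to $O(1)$ corrections coming from non-generic configurations (in which two of the \emph{distinct} coordinates of a bad point happen to be neighbours in $\mathbb{Z}^d$), the leading terms are: from type I, $2d$ to type I, $16d$ to type II, $12d$ to type III; from type II, $4d$ to type II and $12d$ to type IV; from type III, $4d$ to type III and $8d$ to type IV; from type IV, $6d$ to type IV and $4d$ to the good set (type V). In particular types I, II, III never jump directly to a good point, so before time $\tau$ the walk must visit type IV; moreover from any bad point the walk reaches a good point within three steps with probability bounded below (uniformly in $\lambda,d$ large), hence $\tau < \infty$ almost surely.

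Next I would introduce $a_j := \sup_{\mathcal{T}(\mathbf{x}) = j} \mathrm{E}_{\mathbf{x}}\big[\prod_{i=0}^{\tau-1}\mathbf{H}(S_i,S_{i+1})\big]$ for $j \in \{1,2,3,4\}$ and, conditioning on the first step (resumming the self-loop, whose weight is $1$), derive
\[
a_j \;\le\; \Big(1 + \tfrac{1}{M(\mathbf{x})-1}\Big)\Big[\,\sum_{k=1}^4 \tfrac{n_{jk}}{8d}\,a_k \;+\; \tfrac{n_{j5}}{8d}\,\Big], \qquad j = 1,2,3,4,
\]
with $n_{j5}$ nonzero only for $j = 4$. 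In the ordering $(1,2,3,4)$ the coefficient matrix $(n_{jk}/(8d))_{j,k\le 4}$ is, up to $o(1)$ as $d \to \infty$, upper triangular with diagonal entries $\tfrac14, \tfrac12, \tfrac12, \tfrac34 < 1$, so the system is non-degenerate for $d$ large and has the finite nonnegative supersolution $(a_1^*,a_2^*,a_3^*,a_4^*)$ obtained by back-substitution, namely $(24,6,4,2)$ in the generic limit. To legitimize passing from the inequality to the bound I would run it on the truncations $a_j^{(n)} := \sup_{\mathcal{T}(\mathbf{x})=j}\mathrm{E}_{\mathbf{x}}[\prod_{i<\tau\wedge n}\mathbf{H}]$, which are finite, satisfy the same recursion with $n$ decreased by one, and hence obey $a_j^{(n)} \le a_j^*$ by induction; monotone convergence (all factors $\mathbf{H}\ge 1$ and $\tau < \infty$ a.s.) then gives $a_j \le a_j^*$. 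Finally, since $\Gamma_d = 1-\gamma_d \to 0$, one picks $d_0$ so large that $6\,\Gamma_d \max_j a_j^* < 1$ and $\lambda_0$ so large that all self-loop weights are $1$, and takes $K = \max_j a_j^*$.

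The hard part will be the bookkeeping behind the counts $n_{jk}$, and especially the non-generic corrections: when two distinct coordinates of a bad point are nearest neighbours, a genuine move can change the type of its image — a type-IV target may degenerate to type II or III, or a good target may become bad — and one must verify that at most $O(1)$ of the $\Theta(d)$ targets are so affected, so that each $n_{jk}/(8d)$ still tends to its generic value and, crucially, that the diagonal entries $n_{jj}/(8d)$ stay strictly below $1$ for \emph{every} bad point; this last point is exactly what makes the linear system solvable with a positive solution. A secondary point worth stressing is why the sharp renewal analysis is needed at all: a crude bound $\prod_{i<\tau}\mathbf{H} \le 4^{\tau}$ together with geometric tails of $\tau$ does \emph{not} give integrability, because the one-step multiplicative growth can exceed one, and only the triangular structure of the type chain — with its contracting diagonal — rescues finiteness.
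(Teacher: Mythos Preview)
Your proof is correct and takes a genuinely different route from the paper's. The paper decomposes $[0,\tau)$ into type-change epochs $[\tau_{n-1},\tau_n)$: it first bounds the product over a single epoch by a universal constant $C_1$ via the one-step inequality $\sup_{\mathbf{x}\text{ bad}}\mathrm{E}_{\mathbf{x}}[\mathbf{H}(\mathbf{x},S_1),\,\tau_1>1]<1$, and then controls the number $\mathscr{T}$ of epochs by coupling the type sequence $(\mathcal{T}(S_{\tau_n}))_n$ with a five-state chain $(Y_n)$ whose backward-jump probabilities are $O(1/d)$; the final bound is $K=\mathrm{E}_1[C_1^{\tilde{\mathscr{T}}}]$, shown finite once $d>324\,C_1^4$ via the crude tail estimate $\mathrm{P}_1(\tilde{\mathscr{T}}\ge n)\le (2/d)^{n/4}3^n$. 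You instead run a direct one-step renewal, exploiting the clean identity $p(\mathbf{x},\mathbf{y})\mathbf{H}(\mathbf{x},\mathbf{y})=1/(8d)$ for every genuine move (and $\mathbf{H}=1$ on self-loops once $\lambda\ge 12$), so that the whole computation collapses to a $4\times 4$ linear system in the type-wise suprema $a_j$; the asymptotic upper-triangularity with contracting diagonal $(\tfrac14,\tfrac12,\tfrac12,\tfrac34)$ and your truncation-plus-induction then deliver the explicit limit $(a_1^*,a_2^*,a_3^*,a_4^*)=(24,6,4,2)$. Both arguments rest on the same structural observation---that generically the type can only increase along genuine moves---but yours is more elementary and far more quantitative (an explicit $K$ close to $24$, hence an explicit threshold $6\Gamma_d\cdot 24<1$), whereas the paper's coupling argument is less sharp but would transfer more readily to variants in which the product $p\cdot\mathbf{H}$ is not constant across transitions.
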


\begin{proof}[Proof of Lemma \ref{lemma 3.3}]
Let $\tau_0 = 0$ and define recursively $\tau_n,\, n \geq 1$ as
\begin{equation*}
\tau_n = \inf \{ m > \tau_{n-1}: \mathcal{T} (S_m) \neq \mathcal{T} (S_{\tau_{n-1}})\}.
\end{equation*}
Let
$$\mathcal{P} (i \rightarrow j) = \sup_{\mathbf{x}:\mathcal{T}(\mathbf{x})=i}\,\mathrm{P} (\,\mathcal{T} (S_{\tau_n}) = j\, |\, S_{\tau_{n-1}} = \mathbf{x}\,).$$
Then it can be checked directly that  if $d \geq 5$,
$$
\mathcal{P} (i \rightarrow i-1) \leq 2/d\quad \text{and} \quad  \mathcal{P} (i \rightarrow i-2) \leq 2/d  \quad \text{if \quad $d \geq 3$},
$$
and
$$
\mathcal{P} (2 \rightarrow 1) \leq 2/d.
$$
To see the property above, we take $\mathcal{T}(S_i)=2$ as an example. If $S_i=(x,x,x,y)$ for some $x\sim y$ and some $i\geq 1$, then
\begin{align*}
&\mathrm{P}\big(\mathcal{T}(S_{i+1})=1\big|\mathcal{T}(S_{i+1})\neq 2, S_i=(x,x,x,y)\big)\\
&=\mathrm{P}\big(S_{i+1}=(x,x,x,x)\text{~or~}(y,y,y,y)\big|\mathcal{T}(S_{i+1})\neq 2, S_i=(x,x,x,y)\big)\\
&=\frac{2}{12d-1}\leq \frac{2}{d}
\end{align*}
for  $d \geq 5$. Note that the above probability is $0$ if $y\neq x$ and $y$ is not a neighbor of $x$.
We remark that the constant $2$ is not so important. The main point is that the probability is of order $\mathcal{O} (1/d)$ that the type $\mathcal{T} (S_{\tau_n})$ becomes smaller and  such a transition occurs when $S_n$ has some coordinates that are neighbors with each other. Let $(Y_n)_{n \geq 0}$ be a $\{1,2,3,4,5\}$-valued Markov process with transition probability $p (\cdot,\cdot)$ given by
\begin{align*}
&\qquad p (1,2) = 1, \,\,p (2,1) = 2/d, \,\,p (2,3) = 1 - 2/d, \,\,p (5,4) = 1,\\
&p (i,i-1) = p(i,i-2) = 2/d \quad  \text{and} \quad p (i,i+1) = 1 - 4/d \quad \text{for $i = 3,\,4$}.
\end{align*}
Then we can couple $(\mathcal{T} (S_{\tau_n}))_{n \geq 0}$ and $(Y_n)_{n \geq 0}$ together such that
\begin{equation}\label{a7}
\mathcal{T} (S_{\tau_n}) \geq Y_n, \quad \text{for all $n \geq 0$}.
\end{equation}
Define
\begin{equation*}
\mathscr{T} = \inf \{n: \text{$S_{\tau_n}$ is a good point}\} \quad \text{and} \quad
\tilde{\mathscr{T}} = \inf \{n: Y_n = 5\}.
\end{equation*}
Then $\mathscr{T}  \leq \tilde{\mathscr{T}}$ by \eqref{a7}.

\,

\begin{claim}
There exists a constant $C_1 < \infty$ independent of $\lambda$ and $d$ such that if  $\lambda$ and $d$ are large enough, then
\begin{equation}\label{a9}
\sup_{\text{$\mathbf{x}$ is a bad point}} \mathrm{E}_{\mathbf{x}} \Big[\,\prod_{i=0}^{\tau_1 - 1} \mathbf{H} (S_i,S_{i+1})\,\Big] \leq C_1.
\end{equation}
\end{claim}

\,

Using the strong Markov property  and  \eqref{a9}, if  $\mathbf{x}$ is a bad point, then
\begin{equation*}
\mathrm{E}_{\mathbf{x}} \Big[\,\prod_{i=0}^{\tau - 1} \mathbf{H} (S_i,S_{i+1})\,\Big] \leq \mathrm{E}_{\mathbf{x}} [C_1^{\mathscr{T}}] \leq \mathrm{E}_1 [C_1^{\tilde{\mathscr{T}}}],
\end{equation*}
where $\mathrm{E}_1$ is the expectation of the random walk $(Y_n)_{n \geq 0}$ starting from $1$. Below we shall prove that
\begin{equation}\label{a8}
	\sup_{d > 324\,C_1^4} \, \mathrm{E}_1 [C_1^{\tilde{\mathscr{T}}}] < \infty.
\end{equation}
Then the lemma follows by taking the constant $K = 	\mathrm{E}_1 [C_1^{\tilde{\mathscr{T}}}]$ and from the well-known fact $\lim_{d \rightarrow \infty} \Gamma_d = 0$.

 It remains to prove \eqref{a9} and \eqref{a8}. We first prove \eqref{a9}. It can be checked directly that if $\lambda$ and $d$ are large enough, then there exists a constant $C_2 < 1$ independent of $\lambda$ and $d$ such that
 \begin{equation*}
  \sup_{\text{$\mathbf{x}$ is a bad point}} \mathrm{E}_{\mathbf{x}} [\mathbf{H} (\mathbf{x},S_1),\, \tau_1 > 1] < C_2.
 \end{equation*}
For example, if $\mathbf{x}$ is type I, then
\begin{equation*}
 \mathrm{E}_{\mathbf{x}} [\mathbf{H} (\mathbf{x},S_1),\, \tau_1 > 1]  \leq \frac{1}{30d+1} \bigvee \frac{3}{8 \lambda d}+  \frac{2\,\lambda\,d}{8\,\lambda\,d} < 1 \quad \text{for lage enough  $\lambda$ and $d$}.
\end{equation*}
The remaining three cases are similar, and we leave the details to the readers. By Markov property,
 \begin{equation*}
\mathrm{E}_{\mathbf{x}} \Big[\,\prod_{i=0}^{\tau_1 - 1} \mathbf{H} (S_i,S_{i+1})\,\Big] \leq \sum_{n = 1}^{\infty} 4\,C_2^{n-1} < \infty.
\end{equation*}
The constant $4$ comes from the fact that $||\mathbf{H}||_\infty \leq 4$. This proves \eqref{a9}. To prove \eqref{a8}, first note that if the random walk $\{Y_n\}$ has not hit the point $5$ before time $n$, then it must jump to left at least $n/4$ times in the first $n$ steps. Since each time the random walk has at most three choices,
\begin{equation*}
\mathrm{P}_1 (\tilde{\mathscr{T}} \geq n) \leq (2/d)^{n/4} 3^n.
\end{equation*}
Then
\begin{equation*}
 \mathrm{E}_1 [C_1^{\tilde{\mathscr{T}}}] \leq \sum_{n = 0}^{\infty} C_1^n \mathrm{P}_1 (\tilde{\mathscr{T}} \geq n)  < \infty
\end{equation*}
as long as $d > 324 \, C_1^4$. This proves \eqref{a8}, and thus finish the proof of the lemma.
\end{proof}

\begin{remark}\label{rm2}
Using the same strategy, we can show that, when $d$ and $\lambda$ are sufficiently large, there exists $\epsilon_0 > 0$ such that
\begin{equation}\label{a10}
\sup_{|x-y| > N^{1/2}}\,\mathrm{E}_{(x,x,y,y)} \left[ \prod_{i=0}^{\infty} \mathbf{H} (S_i,S_{i+1})^{1+\epsilon_0} \right] < \infty.
\end{equation}
This extension will be useful in the next section.
\end{remark}

\section{Proof of Proposition \ref{correlation}}\label{sec cor}

In this section we prove Proposition \ref{correlation}. Since the proof is similar to that of Proposition \ref{fourth moment}, we omit most of the details.

\,

\begin{proof}[Proof of Proposition \ref{correlation}]
Let
\begin{equation*}
\tilde{\mathbf{F}}_t (x,y,z,w) := \mathbb{E}\, [\eta_t (x)\eta_t (y)]\,\mathbb{E}\,[\eta_t (z)\eta_t (w)], \quad \forall x,y,z,w \in \mathbb{Z}^d.
\end{equation*}
As in the proof of Proposition \ref{fourth moment}, by \cite[Theorem 9.3.1]{liggettips},
\begin{equation*}
\frac{d}{dt} \tilde{\mathbf{F}}_t (x,y,z,w) = (\tilde{\mathbf{G}} - 8 \lambda d \mathbf{I})\, \tilde{\mathbf{F}}_t (x,y,z,w),
\end{equation*}
where $\mathbf{I}$ is the $(\mathbb{Z}^d)^4 \times (\mathbb{Z}^d)^4$ identical matrix and $\tilde{\mathbf{G}}$ is a $(\mathbb{Z}^d)^4 \times (\mathbb{Z}^d)^4$ matrix. For any $\mathbf{x}=(x_1, x_2, x_3, x_4)\in (\mathbb{Z}^d)^4$, by direct calculation, it is easy to check that $\tilde{\mathbf{G}}(\mathbf{x},\mathbf{y})\leq \mathbf{G}(\mathbf{x},\mathbf{y})$ for any $\mathbf{y}\in (\mathbb{Z}^d)^4$, which implies that
\[
\{\mathbf{y}: \tilde{\mathbf{G}}(\mathbf{x},\mathbf{y})>0\}\subseteq \{\mathbf{y}: \mathbf{G}(\mathbf{x},\mathbf{y})>0\}.
\]
This property allows us to use the same random walk $\{S_n\}_{n\geq 1}$ introduced in the proof of Proposition \ref{fourth moment} to study $\tilde{\mathbf{G}}$. Furthermore, by direct calculation, it is easy to check that
\[
 \mathbf{F}_0(\mathbf{x})=\tilde{\mathbf{F}}_0(\mathbf{x}) \quad \text{~and~} \quad  \tilde{\mathbf{G}}(\mathbf{x}, \mathbf{y})=\mathbf{G}(\mathbf{x}, \mathbf{y})
\]
for any $\mathbf{y}\in (\mathbb{Z}^d)^4$ when $\{x_1, x_2\} \cap \{x_3, x_4\} = \emptyset$ for $\mathbf{x}=(x_1, x_2, x_3, x_4)$.

In conclusion,
\begin{equation*}
\tilde{\mathbf{F}}_t (x,x,y,y) = e^{-8 \lambda d t}\, \sum_{n = 0}^\infty\, \frac{(8\lambda d t)^n}{n!} \mathrm{E}_{(x,x,y,y)}  \,\left[ \prod_{i=0}^{n-1} \frac{M(S_i) \tilde{\mathbf{G}} (S_i,S_{i+1})}{8 \lambda d}\tilde{\mathbf{F}}_0(S_n) \right].
\end{equation*}
Define
\begin{equation*}
\sigma  =  \inf \{n \geq 0: \{S_n (1), S_n (2)\} \cap \{S_n (3),S_n (4)\} \neq \emptyset\},
\end{equation*}
where $S_n (i)$, $1 \leq i \leq 4$,  is the $i$-th component of $S_n$. Then
\begin{align*}
&\mathrm{Cov} (\eta_t (x)^2, \eta_t(y)^2) = \mathbf{F}_t (x,x,y,y) - \tilde{\mathbf{F}}_t (x,x,y,y)\\
&= e^{-8 \lambda d t}\, \sum_{n = 0}^\infty\, \frac{(8\lambda d t)^n}{n!} \mathrm{E}_{(x,x,y,y)}  \,\left[ \prod_{i=0}^{n-1} \frac{M(S_i) {\mathbf{G}} (S_i,S_{i+1})}{8 \lambda d}\mathbf{F}_0(S_n) - \prod_{i=0}^{n-1} \frac{M(S_i) \tilde{\mathbf{G}} (S_i,S_{i+1})}{8 \lambda d} \tilde{\mathbf{F}}_0 (S_n) \right]\\
&= e^{-8 \lambda d t}\, \sum_{n = 0}^\infty\, \frac{(8\lambda d t)^n}{n!} \mathrm{E}_{(x,x,y,y)}  \,\Bigg[\prod_{i=0}^{n-1} \frac{M(S_i) {\mathbf{G}} (S_i,S_{i+1})}{8 \lambda d}\mathbf{F}_0(S_n)\\
&\text{\quad\quad\quad\quad}-\prod_{i=0}^{n-1} \frac{M(S_i) \tilde{\mathbf{G}} (S_i,S_{i+1})}{8 \lambda d} \tilde{\mathbf{F}}_0 (S_n), \sigma< \infty \Bigg],
\end{align*}
since $\tilde{\mathbf{G}} (S_i,S_{i+1})=\mathbf{G}(S_i,S_{i+1})$ and $\mathbf{F}_0(S_n)=\tilde{\mathbf{F}}_0 (S_n)$ on the event $\{\sigma=\infty\}$.
It is well-known that $\mathrm{Cov} (\eta_t (x)^2, \eta_t(y)^2) \geq 0$. Since $\sup_{\mathbf{y}\in (\mathbb{Z}^d)^4}\mathbf{F}_0(\mathbf{y})<+\infty$, to finish the proof of Proposition \ref{correlation}, we only need to prove that
\begin{equation}\label{b1}
\limsup_{N \rightarrow \infty}\, \sup_{|x-y| > N^{1/2}}\, \mathrm{E}_{(x,x,y,y)}  \,\left[ \prod_{i=0}^{n-1} \mathbf{H} (S_i,S_{i+1}), \sigma < \infty \right] = 0.
\end{equation}
Denote by $C_3$ the supremum in Remark \ref{rm2}. Then by H{\"o}lder's inequality, the supremum on the left-hand side of \eqref{b1} is bounded by
$$
C_3^{1/(1+\epsilon_0)}\,\sup_{|x-y| > N^{1/2}}\, \mathrm{P}_{(x,x,y,y)}\,(\sigma < \infty)^{1/(p_{\epsilon_0})},
$$
where $(1+\epsilon_0)^{-1} + p_{\epsilon_0}^{-1} = 1$. This completes the proof since
\begin{equation*}
\limsup_{N \rightarrow \infty}\, \sup_{|x-y| > N^{1/2}}\, \mathrm{P}_{(x,x,y,y)}\,(\sigma < \infty) = 0
\end{equation*}
when $d\geq 3$ according to the fact that $\{S_n(i)-S_n(j)\}_{n\geq 0}$ is a lazy version of the simple random walk on $\mathbb{Z}^d$ for $1\leq i\leq 2$ and $3\leq j\leq 4$.
\end{proof}

\quad

\textbf{Acknowledgments.}  Xue thanks the financial support from the National Natural Science Foundation of China with grant numbers 11501542.  
Zhao thanks the financial support from the ANR grant MICMOV (ANR-19-CE40-0012) of the French National Research Agency (ANR).

\bibliographystyle{plain}
\bibliography{zhaoreference}

\end{document}